\documentclass[11pt,a4paper]{article}
\usepackage{amsmath,amssymb,amsthm}
\usepackage{geometry}
\usepackage[hidelinks]{hyperref}
\numberwithin{equation}{section}
\allowdisplaybreaks[2]
\theoremstyle{plain}
\newtheorem{theorem}{Theorem}[section]
\newtheorem{proposition}[theorem]{Proposition}
\newtheorem{lemma}[theorem]{Lemma}

\theoremstyle{remark}
\newtheorem{remark}[theorem]{Remark}
\newcommand{\R}{\mathbb{R}}
\providecommand{\C}{}
\renewcommand{\C}{\mathbb{C}}
\newcommand{\Cp}{\mathbb{C}^{+}}
\newcommand{\Cm}{\mathbb{C}^{-}}
\newcommand{\Pp}{\mathcal{P}}
\newcommand{\bp}{\boxplus}
\newcommand{\supp}{\operatorname{supp}}
\newcommand{\im}{\operatorname{Im}}

\newcommand{\ind}{\mathbf{1}}
\newcommand{\abs}[1]{\left\lvert #1\right\rvert}

\title{The Berry--Esseen Estimate \\in the Free Central Limit Theorem}
\author{
Makoto Maejima\thanks{
Professor Emeritus, Keio University, Japan. \texttt{maejima@math.keio.ac.jp}
}
\and
Noriyoshi Sakuma\thanks{
Department of Mathematics, Graduate School of Science,
The University of Osaka, 1-1 Machikaneyama-cho, Toyonaka, Osaka 560-0043, Japan.\\
Corresponding author:\texttt{sakuma@math.sci.osaka-u.ac.jp}
}
}
\date{\today}

\begin{document}
\maketitle

\begin{abstract}
We consider sums of freely independent self-adjoint random variables that are not necessarily identically distributed. 
Let $\mu_j$ denote the distribution of the $j$th summand. 
We assume that they have mean zero and finite absolute moments of order $2+\delta$, where $0<\delta\le 1$. 
Let $\Delta$ denote the Kolmogorov distance, let $\mu^{(n)}$ be the distribution of the normalized partial sum, let $\omega$ be the standard semicircle law, and let $B_n^2$ be the variance of the partial sum.
The purpose of this paper is to prove the Berry--Esseen estimate in the free central limit theorem.
Namely, there exists an absolute constant $C>0$ such that, for every $0<\delta\le 1$,
\[
 \Delta(\mu^{(n)},\omega)
 \le \frac{C}{B_n^{2+\delta}}\sum_{j=1}^n
 \int_{\R}|x|^{2+\delta}\,\mu_j(dx),
\]
Our result not only improves several known estimates for general non-identically distributed random variables, but also establishes exactly the same Berry--Esseen estimate as in classical probability theory.
The proof combines truncation, a quantitative estimate for the
$R$-transform, a stability analysis of a perturbed semicircle equation, and a Bai-type smoothing inequality.
\end{abstract}

\noindent\textbf{Keywords:} free central limit theorem, Berry--Esseen estimate, Lyapunov fraction, $R$-transform, Cauchy transform, semicircle law.

\noindent\textbf{2020 Mathematics Subject Classification:} 46L54, 60F05.

\section{Introduction and main result}\label{sec:intro}

Let $\Pp(\R)$ be the set of all Borel probability measures on $\R$. Let $\mu_1,\ldots,\mu_n\in\Pp(\R)$ satisfy
\[
 \int_{\R}x\,\mu_j(dx)=0,
 \qquad
 \sigma_j^2:=\int_{\R}x^2\,\mu_j(dx)\in(0,\infty).
\]
Denote
\begin{align*}
&\beta_r(\mu):=\int_{\R}|x|^r\,\mu(dx),\quad \mu\in\mathcal{P}(\R),\quad r\ge 1,\\
& B_n^2:=\sum_{j=1}^n\sigma_j^2,
 \qquad
 \mu_{n,j}:=D_{B_n^{-1}}\mu_j,
 \qquad
 \mu^{(n)}:=\mu_{n,1}\bp\cdots\bp\mu_{n,n},
\end{align*}
where $D_c\mu$ is the push-forward of $\mu$ under the map $x\mapsto cx$ and the symbol $\bp$ denotes the free additive convolution. Let $\omega$ be the standard semicircle law with mean zero and variance one:
\[
 \omega(dx)=\frac{1}{2\pi}\sqrt{4-x^2}\,\ind_{[-2,2]}(x)\,dx.
\]
For probability measures $\mu$ and $\nu$, their Kolmogorov distance is defined by
\[
 \Delta(\mu,\nu):=\sup_{x\in\R}
 \abs{\mu(( -\infty,x])-\nu(( -\infty,x])}.
\]
We also write $F_\mu(x):=\mu(( -\infty,x])$ for the distribution function of $\mu$.

The rate of convergence in the free central limit theorem has been studied in detail by Chistyakov--G\"otze \cite{CG} and in later works. 
In particular, consider freely independent self-adjoint random variables that are not necessarily identically distributed, and have mean zero, variances $\sigma_j^2>0$, and finite third absolute moments. For their normalized partial sum, Chistyakov--G\"otze \cite[Theorem~2.6]{CG} obtained a bound given by {\it the square root} of the third Lyapunov fraction,
\[
 L_{3,n}=B_n^{-3}\sum_{j=1}^n\int_{\R}|x|^3\,\mu_j(dx)
\]
In the identically distributed case, another argument in the same paper gives the optimal order $n^{-1/2}$.

Maejima--Sakuma \cite{MS} used truncation to obtain rates in the free central limit theorem under weaker moment conditions. On the other hand, Neufeld \cite[Theorem~1.7]{Neufeld2024} considered bounded, freely independent variables that are not necessarily identically distributed. If $T_j$ is a support radius of the $j$th variable, her result gives the linear bound
\[
 \Delta(\mu^{(n)},\omega)
 \le C B_n^{-3}\sum_{j=1}^n T_j^3.
\]
Neufeld \cite{Neufeld2025} also used the subordination method under a finite fourth-moment assumption. For every $\varepsilon\in(0,1/2)$, she obtained a bound using the fourth Lyapunov fraction raised to the power $1/2-\varepsilon$.

The purpose of this paper is to prove the following theorem.
This estimate has {\it exactly} the same form as the Berry--Esseen estimate in classical probability theory.

\begin{theorem}[Main theorem]\label{thm:main}
Let $\mu_1,\ldots,\mu_n\in\Pp(\R)$ satisfy that 
$\int_{\R}x\,\mu_j(dx)=0$,
and $\sigma_j^2:=\int_{\R}x^2\,\mu_j(dx)\in(0,\infty)$, 
and write $B_n^2:=\sum_{j=1}^n\sigma_j^2$.
Let $0<\delta\leq 1$ and
assume that $\beta_{2+\delta}(\mu_j)<\infty$.
Then, there exists an absolute constant $C>0$ such that
\begin{equation}\label{eq:two-plus-delta}
 \Delta(\mu^{(n)},\omega)
 \le  \frac{C}{B_n^{2+\delta}}\sum_{j=1}^n
 \int_{\R}|x|^{2+\delta}\,\mu_j(dx).
\end{equation}
\end{theorem}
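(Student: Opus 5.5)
We write $L:=B_n^{-2-\delta}\sum_{j}\beta_{2+\delta}(\mu_j)$ for the Lyapunov fraction. Since $\Delta\le 1$, we may assume $L\le c_0$ for a small absolute constant $c_0$; otherwise we take $C\ge c_0^{-1}$. Normalizing, we may also assume $B_n=1$, so that $\mu^{(n)}=\mu_1\bp\cdots\bp\mu_n$. By Lyapunov's inequality, $\sigma_j^{2+\delta}\le\beta_{2+\delta}(\mu_j)$, hence $\max_j\sigma_j\le L^{1/(2+\delta)}$ is small.

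\textbf{Truncation.} We truncate each summand at level $1$. Let $\tilde\mu_j$ be the law of $X_j\ind_{\{|X_j|\le 1\}}$, recentred and rescaled so that the recentred parts have total variance one. The cost in total variation is $\sum_j\mu_j(|x|>1)\le L$. The shift is $|\int_{|x|\le1}x\,\mu_j(dx)|\le\int_{|x|>1}|x|^{2+\delta}\mu_j(dx)$. The variance defect is $\sum_j\int_{|x|>1}x^2\mu_j(dx)\le L$.

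To compare $\mu^{(n)}$ with the convolution of the truncated laws we cannot use classical coupling. Instead we use the free analogue: freeness gives $\Delta(\mu\bp\nu,\mu'\bp\nu)\le\|\mu-\mu'\|_{TV}$, by realizing the difference as a perturbation by a projection of trace $\|\mu-\mu'\|_{TV}$ and using rank-type interlacing in a II$_1$ factor. This is applied summand by summand, which loses $\sum_j\mu_j(|x|>1)\le L$. Rescaling and shifting the semicircle changes the Kolmogorov distance by $O(L)$.

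So it suffices to treat measures $\nu_j$ supported in $[-2,2]$ with mean $0$, $\sum\sigma_j^2=1$, and
\[
\sum_j\beta_3(\nu_j)\le\sum_j\beta_{2+\delta}(\nu_j)\lesssim L .
\]
Here $|x|^3\le|x|^{2+\delta}$ on $|x|\le 1$. The problem is thus reduced to a linear bound in the third Lyapunov fraction for bounded summands. This is Neufeld's setting, but with $\beta_3$ in place of $T_j^3$, so we must reprove it and cannot simply cite it.

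\textbf{Analytic step.} For $\nu_j$ supported in $[-2,2]$, the $R$-transform $R_j(z)=\sigma_j^2 z+\rho_j(z)$ is analytic in $|z|<c$, with $|\rho_j(z)|\le C\beta_3(\nu_j)|z|^2$. This is obtained from the Lagrange inversion / moment–cumulant bound $|\kappa_k(\nu_j)|\le C^k\sigma_j^{2}\,m$ and the fact that $\beta_k\le 2^{k-3}\beta_3$. The point of the bound is that the constant involves only $\beta_3$, not $\sigma_j\max|x|$. I expect this step to need care: a naive bound on free cumulants involves $\max_j\sigma_j$ and the support radius. I would instead work directly with the reciprocal Cauchy transform: $F_{\nu_j}(z)=z-\sigma_j^2/z+E_j(z)$ with $|E_j(z)|\le C\beta_3(\nu_j)/|z|^2$ for $|z|\ge 3$. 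Inverting then uses Rouché.

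By additivity, $R_{(n)}(z)=z+\rho(z)$ with $|\rho(z)|\le CL|z|^2$. The Cauchy transform $G=G_{\mu^{(n)}}$ therefore satisfies the perturbed semicircle equation
\[
G^2-zG+1=-G\rho(G)=:\eta(z),\qquad |\eta|\lesssim L|G|^3 .
\]
This holds first for $|z|$ large, and extends by analytic continuation to the domain where $|G|$ stays below the radius $c$.

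\textbf{Stability and smoothing (main obstacle).} Comparing with the semicircle Cauchy transform $G_\omega$ via $(G-G_\omega)(G+G_\omega-z)=\eta$, and using $|G_\omega-G_\omega^{-1}|=|\sqrt{z^2-4}|$, we aim to show that on $\{z=u+iv:\ |u|\le 3,\ v\ge v_0\}$ with $v_0\asymp L$,
\[
|G(z)-G_\omega(z)|\lesssim \frac{L}{\sqrt{|z^2-4|}+\sqrt{v}} .
\]
This requires a continuity (bootstrap) argument in $v$, starting from large $v$ and decreasing. The key is to keep $G$ on the correct branch and $|G|\le c$, exploiting $|\eta|\ll|z^2-4|$ away from the edges. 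Near the edges $\pm2$ the square-root degeneracy is the hardest part.

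Finally we apply Bai's smoothing inequality:
\[
\Delta\lesssim\int_{-A}^{A}|G-G_\omega|(u+iv_0)\,du+v_0^{-1}\sup_x\int_{|y|\le 2v_0 a}|F_\omega(x+y)-F_\omega(x)|\,dy+\text{tail}.
\]
The integral is $\lesssim L\int du/\sqrt{|u^2-4|}\lesssim L$, and the Lipschitz/Hölder-$3/2$ regularity of $F_\omega$ makes the second term $O(v_0)=O(L)$. Combining this with the truncation error yields the bound in Theorem~\ref{thm:main}.
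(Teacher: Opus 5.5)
There is a genuine gap in your analytic step. Your truncation bookkeeping and the reduction to bounded summands with $\sum_j\beta_3\lesssim L$ are sound; they mirror Lemma~\ref{lem:trunc-comparison} and Lemma~\ref{lem:weaker-moment}. The problem is the truncation level: you cut at $B_n$ instead of at a small multiple of $B_n$.

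\textbf{Why the disk is too small.} For summands supported in $[-2,2]$ the $R$-transform is controlled only on a small disk. Your own reciprocal-Cauchy-transform route needs $|z|\ge 3$, i.e.\ roughly $|w|\le 1/3$, and nothing better holds in general. For $\nu=(1-p)\delta_0+\frac p2(\delta_{-2}+\delta_2)$ the equation $G_\nu(z)=w$ reads $(wz-1)(z^2-4)=4p$. The branch $z\sim 1/w$ collides with the root near $z=2$, so $R_\nu$ has branch points near $w=\frac12\pm i\sqrt{p/2}$ for every small $p$, even though $\beta_3(\nu)=8p$.

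Bai's inequality, however, needs the perturbed equation at $z=u+iv$ with $|u|\le 2$ and $v\asymp L$, where $|G|\approx|G_\omega|\approx 1$. So ``the domain where $|G|$ stays below $c$'' never reaches the region you need, and no bootstrap in $v$ can change that.

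\textbf{Your pointwise target is false in your reduced class.} Take the $\nu$ above with $p\asymp L$ as one summand, plus a nearly semicircular remainder of variance $1-4p$ (many tiny symmetric Bernoulli summands). Subordination, $G(z)=G_\nu(z-(1-4p)G(z))$, creates a BBP-type outlier bump near $\pm\frac52$ of mass $\approx p/2$ and width $\asymp\sqrt p$. At $z=\frac52+ip$ one gets $G\approx\frac12-i\sqrt{2p/3}$ while $G_\omega\approx\frac12$, so $|G-G_\omega|\asymp\sqrt p\gg L$.

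The sharper support $[-1-O(L),1+O(L)]$ that your truncation actually produces does not rescue the argument. Atoms near $\pm1$ put singularities of $R_\nu$ near $w=\pm1$, which is exactly where $G_\omega$ sits at the edges $\pm2$. There the remainder is $\approx p\,w^3/(1-w^2)$, which is not $O(\beta_3|w|^2)$.

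\textbf{The fix.} The missing idea is to truncate at $\tau B_n$ with a small absolute constant $\tau$; the paper takes $\tau=1/(1800\sqrt2)$. After centering and normalization every summand then lies in $[-a_0,a_0]$ with $a_0=1/900$, far below the outlier threshold. The paper's argument then runs as follows.
\begin{itemize}
\item Rouch\'e applied to $H(z)=G_\nu(1/z)$ on the disk $|z|<6$ (Theorem~\ref{thm:quant-R}) continues $R_\nu$ analytically to $|w|<3$, with $|R_\nu(w)-m_2(\nu)w|\le 13\beta_3(\nu)|w|^2$. This disk contains the closed unit disk with room to spare.
\item A continuity argument along vertical rays (Lemma~\ref{lem:global-eq}) shows $|G|<2$ and $z=1/G+G+e(G)$ on all of $\{\im z>8C_1\lambda\}$, with $\lambda\asymp L$.
\item Selecting the small root of $d^2-sd-q=0$, using $|s(z)|^2\ge 2\im z$, handles the edges without loss.
\end{itemize}
The truncation costs become $\tau^{-(2+\delta)}L$, $\tau^{-(1+\delta)}L$ and $\tau^{-\delta}L$. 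On $|x|\le\tau B_n$ one still has $|x|^3\le(\tau B_n)^{1-\delta}|x|^{2+\delta}$. With this change the remainder of your plan goes through.
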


To prove Theorem~\ref{thm:main} for any $\delta\in (0,1]$, we first show
the case $\delta=1$, namely, the following theorem.
\begin{theorem}\label{thm:main-1}
\begin{equation}\label{eq:3}
\Delta(\mu^{(n)},\omega)\le CL_{3,n}\left(=\frac{C}{B_n^3}\sum_{j=1}^n\int_{\R}|x|^3\,\mu_j(dx)\right).
\end{equation}
\end{theorem}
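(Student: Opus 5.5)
We prove Theorem~\ref{thm:main-1} in four steps: a trivial case, truncation, an analytic comparison of Cauchy transforms, and a smoothing inequality. Write $\varepsilon:=L_{3,n}$.

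\textbf{Trivial case.} Since $\Delta\le 1$, we may assume $\varepsilon\le \varepsilon_0$ for a small absolute constant $\varepsilon_0$. Lyapunov's inequality $\sigma_j^3\le \beta_3(\mu_j)$ then gives
\[
\max_j \sigma_j/B_n\le \varepsilon^{1/3}\le \varepsilon_0^{1/3},
\]
so every summand is uniformly small.

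\textbf{Truncation.} Truncate each $\mu_{n,j}$ at level $1$: replace the mass outside $[-1,1]$ by an atom at $0$, then recenter and rescale to restore mean zero and total variance one. Denote the resulting laws by $\tilde\mu_{n,j}$ and their free convolution by $\tilde\mu^{(n)}$.

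For the Kolmogorov distance between $\mu^{(n)}$ and $\tilde\mu^{(n)}$, use the free analogue of the union bound via projections. For freely independent $X_j$ and $\tilde X_j=X_j\ind_{\{|X_j|\le 1\}}$ (functional calculus), the sums agree on the complement of the sup of the projections $\ind_{\{|X_j|>1\}}$. This projection has trace at most
\[
\sum_j\mu_{n,j}(|x|>1)\le\sum_j\beta_3(\mu_{n,j})=\varepsilon .
\]
A rank/trace argument for spectral distributions of operators differing on a projection of trace $p$ then gives a Kolmogorov error $\le p$ (up to a constant). The recentering and rescaling change the mean by at most $\sum_j\int_{|x|>1}|x|\,d\mu_{n,j}\le\varepsilon$ and the variance by a comparable amount, which shifts and dilates the measure. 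The semicircle distribution function is Lipschitz, so these costs are $O(\varepsilon)$ as well.

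\textbf{$R$-transform estimate.} For compactly supported $\nu$ with mean $0$, variance $s^2$ and support in $[-1,1]$, expand
\[
R_\nu(z)=s^2z+r_\nu(z),\qquad |r_\nu(z)|\le C\,\beta_3(\nu)|z|^2
\]
for $|z|$ small. This follows from the Lagrange inversion expansion $R=\sum \kappa_k z^{k-1}$, with the free cumulant bound $|\kappa_k|\le C^k\beta_3$ for $k\ge3$; that bound uses only the support radius $1$ and the Catalan-type count of non-crossing partitions.

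Summing over $j$ gives $\tilde G:=G_{\tilde\mu^{(n)}}$ satisfying the perturbed semicircle equation
\[
\tilde G(z)\bigl(z-\tilde G(z)-\eta(\tilde G(z))\bigr)=1,\qquad |\eta(w)|\le C\varepsilon|w|^2 .
\]
The real issue is that $\tilde G$ is only small when $|z|$ is large. To reach $z$ near the real axis one needs either the subordination formulation or analytic continuation of $R$. Since each $\tilde\mu_{n,j}$ has support in $[-1,1]$ and variance $\le\varepsilon^{2/3}$, its $R$-transform is analytic on a disk of radius $\asymp 1$ independent of $j$. This is a known fact for measures of small variance and bounded support (cf.\ Neufeld's bounded case), and it is what makes the perturbed equation valid for $|w|\le c$. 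Since $|G_\omega|\le 1$ on $\Cp$, we can continue in $z$ into the region where the solution stays in this disk.

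\textbf{Stability analysis (main obstacle).} Compare $\tilde G$ with $G_\omega$, which solves $G^2-zG+1=0$. Subtracting the two equations gives
\[
(\tilde G-G_\omega)(z-\tilde G-G_\omega)=\tilde G\,\eta(\tilde G).
\]
The factor $z-2G_\omega=\sqrt{z^2-4}$ degenerates at the edges $\pm2$. The plan is to show, via a continuity (bootstrap) argument in $\im z$ starting from large $\im z$, that for $z=x+iy$ with $y\ge y_0\asymp\varepsilon$,
\[
|\tilde G(z)-G_\omega(z)|\le \frac{C\varepsilon}{\sqrt{|z^2-4|}+\sqrt{\varepsilon}},
\]
using the quadratic structure to handle the edges. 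Integrating this bound over $x$ gives
\[
\int_{\R}|\tilde G-G_\omega|(x+iy)\,dx\le C\varepsilon\log(1/\varepsilon).
\]

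\textbf{Smoothing inequality.} Apply Bai's inequality: for $\omega$ with bounded density,
\[
\Delta\le C\Bigl(\int_{\R}|\tilde G-G_\omega|(x+iy)\,dx+\frac1y\sup_x\int_{|u|\le 2ay}|F_\omega(x+u)-F_\omega(x)|\,du+\dots\Bigr),
\]
where the second term is $O(y)$. With $y=\varepsilon$ this yields $\varepsilon\log(1/\varepsilon)$. To remove the logarithm, use the refined Bai/Götze--Tikhomirov version, which integrates only over $|x|\le 2+\text{const}$ and handles the edge with the $\sqrt{\varepsilon}$ regularization together with the sharper pointwise bound for $\im(\tilde G-G_\omega)$ near the spectrum. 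Alternatively, the square-root vanishing of the density lets a dyadic decomposition in $|z^2-4|$ sum to $O(\varepsilon)$. I expect this log-free edge control to be the hardest part of the argument.
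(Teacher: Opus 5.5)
\textbf{The truncation level is too large for the $R$-transform step.} You truncate $\mu_{n,j}$ at level $1$, so after recentering and rescaling by $B_n/N_n>1$ the summands are supported in $[-1-O(\varepsilon),1+O(\varepsilon)]$.

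The comparison with $G_\omega$ has to be carried down to $\im z\asymp\varepsilon$. But $|G_\omega(x+iy)|\to1$ as $y\downarrow0$ for \emph{every} $|x|\le2$, not only at the edges. So you need $z=1/\tilde G+\tilde G+\eta(\tilde G)$ with $|\eta(w)|\le C\varepsilon|w|^2$ on a disk of radius strictly larger than $1$. A radius ``$\asymp1$'' is not enough: for any radius $r<1$, the set $\{|\tilde G|<r\}$ stays a fixed distance away from $[-2,2]$.

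Your cumulant bound $|\kappa_k|\le C^k\beta_3$ (with $C$ of Catalan size) gives the remainder estimate only on $|w|<1/C$. For supports of size $1$ the needed bound is actually false. Take $\nu=(1-p)\delta_0+\frac p2(\delta_1+\delta_{-1})$. Then $G_\nu(1/z)=z+pz^3/(1-z^2)$, so $R_\nu(w)-pw\approx pw^3/(1-w^2)$ for $|1-w^2|\gg\sqrt p$. There are branch points near $w=\pm1\pm i\sqrt{2p}$, where the remainder is of order $\sqrt p\gg\beta_3(\nu)=p$, and rescaling by a factor $>1$ can push them inside the unit disk. Summing such summands gives an error of size $\varepsilon/|s(z)|$ in the quadratic equation near $\pm2$. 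The stability argument then yields only $|\tilde G-G_\omega|\lesssim\varepsilon/|s(z)|^2$, whose vertical integral at the edges is of order $\varepsilon\log(1/\varepsilon)$.

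The missing idea is to truncate at a \emph{small} fixed scale $\tau B_n$, with $\tau=1/(1800\sqrt2)$:
\begin{itemize}
\item the truncation costs are still $O(\tau^{-3}L_{3,n})$;
\item every summand now lives in $[-a_0,a_0]$ with $a_0=1/900$;
\item a Rouch\'e argument for $H(z)=G_\nu(1/z)$ gives analyticity of $R_\nu$ and $|R_\nu(w)-m_2(\nu)w|\le13\beta_3(\nu)|w|^2$ on the fixed disk $|w|<3$ (Theorem~\ref{thm:quant-R}), uniformly over the array, so the remainders simply add.
\end{itemize}
The region where the perturbed equation holds is then secured by the barrier argument of Lemma~\ref{lem:global-eq}. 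If $|G(z)|=2$, then $\im(1/G+G)=\frac34\im G<0$, which forces $\im z\le4C_1\lambda$ (with $\lambda:=L_{3,n}$, your $\varepsilon$). Hence $|G|<2$ and the equation holds on all of $\{\im z>8C_1\lambda\}$.

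\textbf{The logarithm is left unresolved, but it is not the hard part.} Integrating horizontally at height $y\asymp\varepsilon$ gives only $\varepsilon\log(1/\varepsilon)$, and you leave the removal of the log as a hope. Use instead the G\"otze--Tikhomirov form of Bai's inequality (Lemma~\ref{lem:bai}).
\begin{itemize}
\item The horizontal integral is taken only at height $1$. There $|s(u+i)|\ge2$, and the decay $|q(u+i)|\le C_1\lambda(|u|-3)^{-3}$ for $|u|>5$ gives $O(\lambda)$.
\item Small heights enter only through $\sup_{|u|\le2}\int_h^1|G-G_\omega|(u+iv)\,dv$ with $h=C_2\lambda$.
\item Since $|s(z)|^2=|z-2|\,|z+2|\ge2\im z$, the pointwise bound $|G-G_\omega|\le16C_1\lambda/|s(z)|$ integrates to at most $16C_1\lambda\int_0^1(2v)^{-1/2}\,dv$, uniformly in $u$, edges included.
\end{itemize}
That pointwise bound comes from identifying $G-G_\omega$ with the small root of $d^2-sd-q=0$, which is essentially your bootstrap. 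No dyadic decomposition and no separate control of $\im(\tilde G-G_\omega)$ is needed.

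Your projection/rank argument for the truncation comparison is a legitimate substitute for the Bercovici--Voiculescu inequality $\Delta(\mu\bp\nu,\mu'\bp\nu')\le\Delta(\mu,\mu')+\Delta(\nu,\nu')$ used in the paper. The proposal falls short only at the two points above.
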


Sections 2--5 prepare the proof of Theorem~\ref{thm:main-1}, which is completed in Section 6, 
We then apply Theorem~\ref{thm:main-1} via a truncation argument in Section 7 to prove Theorem~\ref{thm:main}
for other general $\delta\in (0,1)$.

The proof of Theorem~\ref{thm:main-1} has four main parts: a truncation argument for the tails, a uniform estimate for the $R$-transforms based on a quantitative form of the inverse-function method of Benaych--Georges, a global analysis of a perturbed semicircle equation, and a Bai-type smoothing inequality. In particular, we do not use subordination functions. 

If $\mu_j$ is supported in $[-T_j,T_j]$, then $\beta_3(\mu_j)\le T_j^3$. Thus, Theorem~\ref{thm:main-1} also gives the estimate in \cite[Theorem~1.7]{Neufeld2024}. The main new point is that we use the actual third absolute moments instead of the support radii, and we do not assume boundedness.

We now explain the main steps of the proof. First, we truncate each $\mu_j$ at a fixed small scale $\tau B_n$. The tail mass, the means after truncation, and the change in the total variance are all of order $O(L_{3,n})$. After truncation, centering, and normalization, every distribution is supported in the same small interval around zero. 
Next, we give a quantitative form of an idea of Benaych-Georges
\cite{BG}. More precisely, for every centered probability measure
\(\nu\) supported in the fixed small interval, we prove, on a fixed disk,
\[
  \left|R_\nu(w)-m_2(\nu)w\right|
  \leq C\beta_3(\nu)|w|^2,
\]
where
\[
  m_2(\nu):=\int_{\mathbb R}x^2\,\nu(dx),
  \qquad
  \beta_3(\nu):=\int_{\mathbb R}|x|^3\,\nu(dx).
\]
Using the additivity of the $R$-transform, we then add the remainder terms. 
For a probability measure $\eta$, write
\[
 G_\eta(z):=\int_{\R}\frac{1}{z-x}\,\eta(dx),
 \qquad z\in\Cp,
\]
for its Cauchy transform. 
The Cauchy transform $G$ of the truncated sum satisfies
\[
 z=\frac1{G(z)}+G(z)+e(G(z)),
 \qquad |e(w)|\le C L_{3,n}|w|^2.
\]
A comparison with the semicircle equation gives that for $z \in \{z\in\C \mid \im z\gtrsim L_{3,n}\}$,
\[
 \abs{G(z)-G_\omega(z)}
 \le \frac{C L_{3,n}}{|\sqrt{z^2-4}|}.
\]
Finally, we integrate this pointwise bound in the vertical direction and apply a Bai-type smoothing inequality.

The numerical constants in this paper are chosen to keep the proof simple. We do not try to optimize them.

\section{Preliminaries}\label{sec:prelim}

\subsection{Cauchy transforms and \texorpdfstring{$R$}{R}-transforms}

For $k = 1,2,\ldots$, when the integrals are finite, write
$
 m_k(\mu):=\int_{\R}x^k\,\mu(dx).
 $
For $\mu\in\Pp(\R)$, its Cauchy transform is
\[
 G_\mu(z):=\int_{\R}\frac{1}{z-x}\,\mu(dx),
 \qquad z\in\Cp.
\]
We have $G_\mu(\Cp)\subset\Cm$ and $|G_\mu(z)|\le(\im z)^{-1}$. If $\mu$ is compactly supported, then $G_\mu$ is one-to-one in a neighborhood of infinity. Let $K_\mu$ be its inverse there. The function
\[
 R_\mu(w):=K_\mu(w)-\frac1w
\]
is analytic near $w=0$. The $R$-transform linearizes free additive convolution:
\begin{equation}\label{eq:R-add}
 R_{\mu\bp\nu}(w)=R_\mu(w)+R_\nu(w).
\end{equation}
See \cite{V86,BV}.

The Cauchy transform of the standard semicircle law is
\begin{equation}\label{eq:Gomega}
 G_\omega(z)=\frac{z-s(z)}{2},
 \qquad
 s(z):=(z-2)^{1/2}(z+2)^{1/2}.
\end{equation}
We choose the branch of $s$ that is analytic on $\C\setminus[-2,2]$ and satisfies $s(z)\sim z$ as $z\to\infty$. Then
\begin{equation}\label{eq:semi-eq}
 G_\omega(z)^2-zG_\omega(z)+1=0,
 \qquad
 2G_\omega(z)-z=-s(z).
\end{equation}

\subsection{A truncation comparison for free convolution}

We use the following inequality. By Bercovici--Voiculescu \cite[Proposition~4.13]{BV}, for any probability measures $\mu,\mu',\nu,\nu'$,
\begin{equation}\label{eq:contraction}
 \Delta(\mu\bp\nu,\mu'\bp\nu')
 \le \Delta(\mu,\mu')+\Delta(\nu,\nu').
\end{equation}

\begin{lemma}[Truncation comparison]\label{lem:trunc-comparison}
Let $T>0$. For each $j$, let $\nu_j$ be the probability measure obtained from $\mu_j$ by moving all mass outside $[-T,T]$ to the point zero. Let $Y_j$ have distribution $\nu_j$, and set
\[
 \alpha_j:=\int x\,\nu_j(dx),
 \qquad
 \beta_j^2:=\int x^2\,\nu_j(dx)-\alpha_j^2,
\]
\[
M_n:=\sum_{j=1}^n\alpha_j,
 \qquad
N_n^2:=\sum_{j=1}^n\beta_j^2>0,
 \qquad
\Gamma_n:=\sum_{j=1}^n\mu_j([-T,T]^c).
\]
Let $\widetilde\nu_{n,j}$ be the distribution of $(Y_j-\alpha_j)/N_n$, and let
\[
\widetilde\nu^{(n)}:=\widetilde\nu_{n,1}\bp\cdots\bp\widetilde\nu_{n,n}.
\]
Then $N_n\le B_n$, and
\begin{equation}\label{eq:trunc-comparison}
 \Delta(\mu^{(n)},\omega)
 \le \Delta(\widetilde\nu^{(n)},\omega)
 +\Gamma_n+\frac{|M_n|}{\pi N_n}
 +\frac{2}{\pi}\left(\frac{B_n}{N_n}-1\right).
\end{equation}
\end{lemma}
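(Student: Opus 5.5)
We will pass from $\mu^{(n)}$ to $\widetilde\nu^{(n)}$ in three steps: replace the tails, undo the scaling, and undo the centering, using \eqref{eq:contraction} and the triangle inequality for $\Delta$ at each step.

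\emph{The bound $N_n\le B_n$.} We have $\beta_j^2\le\int x^2\,\nu_j(dx)\le\int x^2\,\mu_j(dx)=\sigma_j^2$. Indeed, moving mass to $0$ only removes the contribution $\int_{|x|>T}x^2\,\mu_j(dx)$. Summing over $j$ gives $N_n\le B_n$.

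\emph{Step 1: tails.} Since $\mu_j$ and $\nu_j$ agree outside a set of mass $\mu_j([-T,T]^c)$, we get $\Delta(\mu_j,\nu_j)\le\mu_j([-T,T]^c)$. Dilation does not change $\Delta$. Applying \eqref{eq:contraction} repeatedly, with $\nu_{n,j}:=D_{B_n^{-1}}\nu_j$ and $\nu^{(n)}:=\nu_{n,1}\bp\cdots\bp\nu_{n,n}$, gives $\Delta(\mu^{(n)},\nu^{(n)})\le\Gamma_n$.

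\emph{Step 2: identify $\nu^{(n)}$.} Free convolution commutes with dilations, and convolving with a point mass $\delta_a$ is a translation. Hence
\[
\nu^{(n)}=D_{N_n/B_n}\bigl(\widetilde\nu^{(n)}\bigr)\bp\delta_{M_n/B_n}.
\]
Equivalently, $\nu^{(n)}$ is the law of $(N_nX+M_n)/B_n$, where $X\sim\widetilde\nu^{(n)}$.

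Since $\Delta$ is invariant under applying the same increasing affine map to both measures, we obtain
\[
\Delta(\nu^{(n)},\omega)=\Delta\bigl(\widetilde\nu^{(n)},\rho\bigr),\qquad \rho:=\text{law of }\bigl(B_nS-M_n\bigr)/N_n,\ S\sim\omega .
\]
Then
\[
\Delta(\mu^{(n)},\omega)\le\Gamma_n+\Delta(\widetilde\nu^{(n)},\omega)+\Delta(\omega,\rho).
\]
This reduces the lemma to a purely deterministic estimate on the semicircle law.

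\emph{Step 3: semicircle estimate (the main computation).} We need
\[
\Delta(\omega,\rho)\le\frac{|M_n|}{\pi N_n}+\frac{2}{\pi}\Bigl(\frac{B_n}{N_n}-1\Bigr).
\]
Split through the intermediate law $\rho_0$ of $(B_n/N_n)S$, so that $\Delta(\omega,\rho)\le\Delta(\rho_0,\rho)+\Delta(\omega,\rho_0)$.

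For the shift term: $\rho_0$ has density bounded by $\max\omega'\cdot N_n/B_n\le 1/\pi$. Its distribution function therefore moves by at most $(1/\pi)\cdot|M_n|/N_n$ under a shift of $|M_n|/N_n$. This gives $\Delta(\rho_0,\rho)\le |M_n|/(\pi N_n)$.

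For the dilation term, set $\lambda=B_n/N_n\ge1$. We need $|F_\omega(x)-F_\omega(x/\lambda)|\le\frac{2}{\pi}(\lambda-1)$. By the mean value theorem this difference equals $\omega'(\xi)\,|x|(1-1/\lambda)$ for some $\xi$ between $x/\lambda$ and $x$. The point to check is that $\xi\,\omega'(\xi)$-type quantities stay bounded. We have $|\xi|\le|x|$, and if $|x|>2\lambda$ both distribution values are equal. A cruder bound suffices: $\omega'(\xi)|x|\le\frac1{2\pi}\sqrt{4-\xi^2}\cdot\lambda|\xi|\le\frac{\lambda}{\pi}$, using $|\xi|\sqrt{4-\xi^2}\le2$. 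Combined with $1-1/\lambda=(\lambda-1)/\lambda$, this yields $\frac1\pi(\lambda-1)\le\frac2\pi(\lambda-1)$.

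Assembling Steps 1–3 gives \eqref{eq:trunc-comparison}. The main care needed is in the dilation estimate: the mean value point must lie where the density is controlled, and the region outside the support must be handled separately.
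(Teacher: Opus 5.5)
Your proof is correct and follows the paper's argument: \eqref{eq:contraction} handles the tails, an affine change of variables relates $\nu^{(n)}$ to $\widetilde\nu^{(n)}$, and a shift-plus-dilation estimate for the semicircle distribution function uses the density bound $1/\pi$. Your mean-value treatment of the dilation term, via $|\xi|\sqrt{4-\xi^2}\le 2$, even gives the bound $\frac{1}{\pi}\left(\frac{B_n}{N_n}-1\right)$, which is slightly sharper than the paper's interval-length bound $\frac{2}{\pi}\left(\frac{B_n}{N_n}-1\right)$.
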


\begin{proof}
We first have
\[
 N_n^2=\sum_j\left\{\int_{[-T,T]}x^2\,\mu_j(dx)-\alpha_j^2\right\}
 \le B_n^2.
\]
Thus, $N_n\le B_n$. Also,
$\Delta(\mu_j,\nu_j)\le\mu_j([-T,T]^c)$. By repeated use of \eqref{eq:contraction}, and because a dilation by the same positive number does not change the Kolmogorov distance,
\[
 \Delta\!\left(D_{B_n^{-1}}(\mu_1\bp\cdots\bp\mu_n),
 D_{B_n^{-1}}(\nu_1\bp\cdots\bp\nu_n)\right)
 \le\Gamma_n.
\]
We may realize the $Y_j$ as freely independent random variables with laws $\nu_j$. Put
\[
 Y_n:=\sum_jY_j,
 \qquad
 Z_n:=\frac{Y_n-M_n}{N_n}.
\]
Then $Z_n$ has law $\widetilde\nu^{(n)}$, and
\[
 \frac{Y_n}{B_n}=\frac{N_n}{B_n}Z_n+\frac{M_n}{B_n}.
\]
Set $p:=B_n/N_n\ge1$ and $q:=-M_n/N_n$. 
It follows that
\[
 F_{D_{B_n^{-1}}(\nu_1\bp\cdots\bp\nu_n)}(x)
 =F_{\widetilde\nu^{(n)}}(px+q).
\]
The density of the semicircle law is at most $1/\pi$. Hence
\[
 \sup_x|F_\omega(x+q)-F_\omega(x)|\le\frac{|q|}{\pi}.
\]
For $p\ge1$, the difference is zero when $|x|>2$. When $|x|\le2$, the interval between $x$ and $px$ has length at most $2|p-1|$. Therefore,
\[
 \sup_x|F_\omega(px)-F_\omega(x)|
 \le\frac{2}{\pi}|p-1|.
\]
The triangle inequality now gives \eqref{eq:trunc-comparison}.
\end{proof}

\subsection{A Bai-type smoothing inequality}

We use the following form of the smoothing inequality. It is a version of Bai's method \cite{Bai} for the semicircle law. See also \cite[Corollary~2.3]{GT} and \cite[Proposition~2.5]{Neufeld2024}.

\begin{lemma}[Bai-type smoothing inequality]\label{lem:bai}
Let $\nu\in\Pp(\R)$ and assume that
\[
 \int_{\R}|F_\nu(x)-F_\omega(x)|\,dx<\infty.
\]
Let $h\in(0,1)$ and let $\varepsilon,\rho,\gamma>0$ satisfy
\[
 \gamma:=\frac1\pi\int_{|x|<\rho}\frac{dx}{1+x^2}>\frac12,
 \qquad
 \varepsilon>2h\rho.
\]
Set $I_\varepsilon=[-2+\varepsilon/2,2-\varepsilon/2]$. Then
\begin{align}
 \Delta(\nu,\omega)
 \le C_\gamma\Bigg(&\int_{\R}|G_\nu(u+i)-G_\omega(u+i)|\,du \notag\\
 &+\sup_{u\in I_\varepsilon}\int_h^1
 |G_\nu(u+iv)-G_\omega(u+iv)|\,dv
 +\frac{4\rho^2h}{\pi}+\gamma\varepsilon^{3/2}\Bigg),
 \label{eq:bai}
\end{align}
where $C_\gamma=(\pi(2\gamma-1))^{-1}$.
\end{lemma}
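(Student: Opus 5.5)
We prove Lemma~\ref{lem:bai} by adapting Bai's classical argument, in the form of \cite[Corollary~2.3]{GT}, to the semicircle law. The starting point is the Poisson smoothing. For $v>0$, let $P_v(x)=\frac{1}{\pi}\frac{v}{x^2+v^2}$, and for a measure $\eta$ set $F_\eta^{(v)}:=F_\eta*P_v$. Then
\[
 F_\nu^{(v)}(x)-F_\omega^{(v)}(x)
 =-\frac1\pi\int_{-\infty}^x\im\bigl(G_\nu(u+iv)-G_\omega(u+iv)\bigr)\,du .
\]
The integrability assumption on $F_\nu-F_\omega$ makes this well defined and lets us integrate from $-\infty$.

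The first step is the standard Bai inequality. Put $D:=F_\nu-F_\omega$ and $\Delta=\sup|D|$. Suppose the supremum is nearly attained at a point $x_0$, say $D(x_0)\ge\Delta-\eta$; the case $D(x_0)\le-(\Delta-\eta)$ is symmetric. Since $F_\nu$ is monotone, $D(x_0+t)\ge \Delta-\eta-(F_\omega(x_0+t)-F_\omega(x_0))$ for $t\ge0$. We average $D$ against $P_h$ centered at a point $x_0+h\rho$. On the window $|y|<h\rho$ this gives the contribution $\gamma(\Delta-\text{local oscillation of }F_\omega)$. Outside the window the contribution is at least $-(1-\gamma)\Delta$. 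The result is
\[
 (2\gamma-1)\Delta\le \sup_x|F^{(h)}_\nu(x)-F^{(h)}_\omega(x)|+\gamma\,\sup_{|x-x_0|\le 2h\rho}|F_\omega(x)-F_\omega(x_0)|+\text{(tail term)}.
\]
Here the semicircle density matters. Near the edges $\pm2$ the density is $O(\sqrt{\varepsilon})$ on intervals of length $\varepsilon$, so the oscillation of $F_\omega$ over such intervals is $O(\varepsilon^{3/2})$. In the bulk, the density is bounded by $1/\pi$, so a window of size $2h\rho$ costs $O(h\rho)$. Tracking the Poisson tails with $\int_{|y|>\rho}$ gives the $\rho^2h$ term. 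This is where the conditions $\gamma>1/2$ and $\varepsilon>2h\rho$ enter: the condition $\varepsilon>2h\rho$ guarantees that if $x_0$ lies outside $I_\varepsilon$, the smoothing window still sits in the region where $F_\omega$ is flat up to $\gamma\varepsilon^{3/2}$.

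The second step bounds $|F^{(h)}_\nu-F^{(h)}_\omega|$ at a point $x$. We integrate the Cauchy transform difference along a path running from $x+ih$ up to $x+i$, and then along the horizontal line $\im z=1$ out to $-\infty$. Because $\Delta$-type quantities are real parts of antiderivatives of $G_\nu-G_\omega$, and these functions are analytic in $\Cp$, Cauchy's theorem lets us deform the horizontal integral at height $h$ into this path. The horizontal piece is bounded by $\int_\R|G_\nu(u+i)-G_\omega(u+i)|\,du$. The vertical piece is $\int_h^1|G_\nu(x+iv)-G_\omega(x+iv)|\,dv$. We only need the vertical piece for $x\in I_\varepsilon$, because for $x_0\notin I_\varepsilon$ the first step was already handled through the edge term $\gamma\varepsilon^{3/2}$. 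There we compare with the smoothed distribution at the nearest point of $I_\varepsilon$, using monotonicity of $F_\nu$ together with the fact that $F_\omega$ is almost constant beyond the edge. Collecting the constants gives $C_\gamma=(\pi(2\gamma-1))^{-1}$.

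The main obstacle is the edge case $x_0\notin I_\varepsilon$, that is, justifying that the supremum over $u$ may be restricted to $I_\varepsilon$. I would follow the treatment of \cite[Proposition~2.5]{Neufeld2024}. If $x_0>2-\varepsilon/2$, we use $1-F_\omega(x_0)\le C\varepsilon^{3/2}$ together with $F_\nu\le1$; if $x_0<-2+\varepsilon/2$, we use the symmetric bound near $-2$. This reduces the edge case to a point inside $I_\varepsilon$, at the cost of $\gamma\varepsilon^{3/2}$.
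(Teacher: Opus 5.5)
Your outline is the standard Bai/G\"otze--Tikhomirov argument: Poisson smoothing, a near-extremal point $x_0$ with a Cauchy window of mass $\gamma$ on the side where monotonicity of $F_\nu$ preserves the deficit, contour deformation to $\im z=1$ plus a vertical segment, and reduction of edge points to $I_\varepsilon$ at cost $1-F_\omega(2-s)\le\frac{2}{3\pi}s^{3/2}$. The paper gives no proof of this lemma and only quotes \cite{Bai}, \cite[Corollary~2.3]{GT} and \cite[Proposition~2.5]{Neufeld2024}, so your approach is theirs and is correct in substance; two small corrections follow.

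First, there is no Poisson-tail term: the mass outside the window is bounded below by $-(1-\gamma)\Delta$, which is what produces the factor $2\gamma-1$. The $\frac{4\rho^2h}{\pi}$ term is the bulk window cost, and your bound $\gamma\cdot\frac{2h\rho}{\pi}$ already gives it, since $\gamma=\frac{2}{\pi}\arctan\rho\le\frac{2\rho}{\pi}$ yields $2\gamma\rho h\le\frac{4\rho^2h}{\pi}$.

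Second, near the right edge, $F_\nu\le 1$ only disposes of the sign $F_\nu(x_0)>F_\omega(x_0)$. This also covers bulk points with $x_0+h\rho\notin I_\varepsilon$, which satisfy $x_0>2-\varepsilon$ because $h\rho<\varepsilon/2$. The opposite sign is exactly where your monotone transport of the deficit to a point of $I_\varepsilon$ is needed.
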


\section{A quantitative \texorpdfstring{$R$}{R}-transform estimate for measures with small support}\label{sec:R}

Benaych--Georges \cite{BG} proved a relation between finite moments and Taylor expansions of the $R$-transform. For our truncated triangular array, we need a remainder bound that is uniform on a fixed disk. The next theorem gives this bound.

\begin{theorem}[Quantitative $R$-transform estimate]\label{thm:quant-R}
Fix $R_0>2$, and set
\[
 a_0:=\frac{1}{100R_0^2}.
\]
Let $\nu$ be a probability measure with mean zero and
$\supp\nu\subset[-a_0,a_0]$. Then $R_\nu$ has an analytic continuation to the disk $\{|w|<R_0\}$, and
\begin{equation}\label{eq:quant-R}
 |R_\nu(w)-m_2(\nu)w|
 \le13\,\beta_3(\nu)|w|^2,
 \qquad |w|<R_0.
\end{equation}
\end{theorem}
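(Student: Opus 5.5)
Our plan is to make the inverse-function method quantitative by working with the reciprocal variable. Put $\zeta=1/z$ and define, for $|\zeta|$ small,
\[
 H(\zeta):=G_\nu(1/\zeta)=\int_{\R}\frac{\zeta}{1-x\zeta}\,\nu(dx).
\]
This function is analytic on $|\zeta|<1/a_0=100R_0^2$, because $|x\zeta|<1$ there. We will invert $H$ near $0$. Since $K_\nu(w)=1/\zeta(w)$, where $\zeta(w)$ is the local inverse of $H$, we have $R_\nu(w)=1/\zeta(w)-1/w$.

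Write $m_1=0$ and $m_2=m_2(\nu)\le a_0^2$. Then
\[
 H(\zeta)=\zeta+m_2\zeta^3+E(\zeta),
 \qquad
 E(\zeta)=\zeta\int\frac{(x\zeta)^3}{1-x\zeta}\,\nu(dx),
\]
using the identity $\frac{1}{1-t}=1+t+t^2+\frac{t^3}{1-t}$. On a disk $|\zeta|\le r$ with $a_0r\le 1/2$, this gives $|E(\zeta)|\le 2\beta_3 r^4$. Note also that $\beta_3\le a_0 m_2$.

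The key step is to show that $H$ is univalent on a disk $|\zeta|\le r$ with $r=2R_0$, and that the image contains $|w|<R_0$. Since $a_0 r=1/(50R_0)$ is tiny, we have $|H(\zeta)-\zeta|\le m_2r^3+2\beta_3r^4\le r\cdot(\text{small})$, because $m_2r^2\le a_0^2\cdot 4R_0^2$ is of order $R_0^{-2}\cdot 10^{-4}$. Rouché's theorem then gives that $H(\zeta)=w$ has exactly one solution in $|\zeta|<r$ for every $|w|<r/2$, in particular for every $|w|<R_0$. This defines an analytic inverse $\zeta(w)$ on $|w|<R_0$, extending the germ at $0$. We then get $|\zeta(w)-w|\le |H(\zeta)-\zeta|$ and $|\zeta(w)|\le 2|w|$, using the smallness $|H(\zeta)-\zeta|\le \epsilon|\zeta|$ with $\epsilon\ll1$. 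It remains to show that $\zeta(w)\ne0$ for $w\ne0$, which holds because $H(0)=0$ and injectivity gives $\zeta(w)=0$ only at $w=0$. Hence $R_\nu(w)=(w-\zeta)/(\zeta w)$ is analytic on the disk, with the removable singularity at $0$ handled by the expansion.

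For the estimate, write $\zeta=\zeta(w)$, so that $w-\zeta=m_2\zeta^3+E(\zeta)$. Then
\[
 R_\nu(w)-m_2w=\frac{m_2\zeta^3+E(\zeta)}{\zeta w}-m_2 w
 =\frac{m_2(\zeta^2-w^2)}{w}+\frac{E(\zeta)}{\zeta w}.
\]
For the second term, $|E(\zeta)|\le 2\beta_3|\zeta|^4$, which bounds it by $2\beta_3|\zeta|^3/|w|\le 16\beta_3|w|^2$. To get the constant 13, we will use the sharper bound $|\zeta|\le(1+\epsilon)|w|$. For the first term, we use
\[
 |\zeta-w|\le m_2|\zeta|^3+2\beta_3|\zeta|^4,
 \qquad |\zeta+w|\le 3|w|,
\]
which gives a bound of order $m_2^2|w|^3+\beta_3 m_2|w|^4$. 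Using $m_2^2\le m_2\cdot a_0^2$, we still need to convert $m_2^2|w|$ into $\beta_3$. Here we use Lyapunov's inequality $m_2^{3/2}\le\beta_3$ together with $m_2^{1/2}\le a_0$. These give $m_2^2|w|^3\le \beta_3 a_0 R_0|w|^2$, which is tiny.

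The main obstacle is the bookkeeping of the constants, namely keeping everything within $13\beta_3|w|^2$ uniformly for $|w|<R_0$. For this we will rely on $a_0R_0^2=1/100$ to make every correction factor a small multiple of the leading terms.
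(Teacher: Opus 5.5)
Your proposal is correct and follows essentially the same route as the paper. Both arguments pass to the reciprocal variable $H(\zeta)=G_\nu(1/\zeta)=\zeta+m_2\zeta^3+O(\beta_3|\zeta|^4)$, apply Rouch\'e on $|\zeta|=2R_0$ to get an analytic inverse on $|w|<R_0$ with $|\zeta(w)|\le 2|w|$, and split $R_\nu(w)-m_2w=m_2(\zeta^2-w^2)/w+E(\zeta)/(\zeta w)$. The differences are only cosmetic: you turn $m_2^2$ into $\beta_3$ via Lyapunov ($m_2^2\le m_2^{1/2}\beta_3\le a_0\beta_3$) where the paper uses $m_2^2\le m_4(\nu)\le a_0\beta_3$, and your remainder constant $2$, weaker than the paper's $\tfrac32$, forces you to use the sharper $|\zeta|\le(1+\epsilon)|w|$ to stay below $13$.
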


\begin{proof}
For brevity, write $m_2=m_2(\nu)$ and $\beta_3=\beta_3(\nu)$. The support assumption gives
\begin{equation}\label{eq:moment-rel}
 m_2\le a_0^2,
 \qquad
 \beta_3\le a_0^3,
 \qquad
 m_2^2\le m_4(\nu)\le a_0\beta_3.
\end{equation}
If $\beta_3=0$, then $\nu=\delta_0$, and the result is clear.

For $|z|\le2R_0$, define
\[
 H(z):=z\int_{\R}\frac{1}{1-zx}\,\nu(dx).
\]
Since $|zx|\le2R_0a_0<1/100$ and $\nu$ has mean zero,
\begin{equation}\label{eq:H-expansion}
 H(z)=z+m_2z^3+z^4I(z),
 \qquad
 I(z):=\int_{\R}\frac{x^3}{1-zx}\,\nu(dx),
 \qquad
 |I(z)|\le\frac32\beta_3.
\end{equation}
Let $\varphi(z):=H(z)-z$. Then
\begin{equation}\label{eq:phi-bound}
 |\varphi(z)|\le\kappa|z|^3,
 \qquad
 \kappa:=m_2+3R_0\beta_3\le2a_0^2.
\end{equation}

Fix $|w|\le R_0$. On the circle $|z|=2R_0$,
\[
 |z-w|\ge R_0,
 \qquad
 |\varphi(z)|\le\kappa(2R_0)^3<R_0.
\]
Rouch\'e's theorem shows that $H(z)-w$ has exactly one zero $z=z(w)$ in $|z|<2R_0$, counted with multiplicity. Thus, this zero is simple, and the holomorphic implicit function theorem shows that $z(w)$ is analytic for $|w|<R_0$. Moreover, from $z=w-\varphi(z)$ and \eqref{eq:phi-bound},
\[
 |z|\le|w|+\kappa(2R_0)^2|z|
 \le|w|+\frac12|z|.
\]
Hence
\begin{equation}\label{eq:z-w-bound}
 |z(w)|\le2|w|.
\end{equation}

When $w$ is sufficiently close to zero, the identities
$H(z)=G_\nu(1/z)$ and $H(z(w))=w$ give
\[
 K_\nu(w)=\frac1{z(w)}.
\]
For $w\ne0$, we have $z(w)\ne0$. Moreover, by
\eqref{eq:z-w-bound} and \eqref{eq:phi-bound},
\[
 z(w)-w=-\varphi(z(w))=O(w^3)
 \qquad (w\to0).
\]
Hence
\[
 \widehat R(w):=\frac1{z(w)}-\frac1w
 =\frac{w-z(w)}{wz(w)}
\]
has a removable singularity at $w=0$. Near zero,
$\widehat R(w)=R_\nu(w)$, and therefore $\widehat R$ is the
desired analytic continuation of $R_\nu$ to $\{|w|<R_0\}$.
We continue to denote this analytic continuation by $R_\nu$.

Moreover, from \eqref{eq:H-expansion} and $H(z(w))=w$, we have
\begin{equation}\label{eq:R-representation}
 R_\nu(w)
 =\frac1{z(w)}-\frac1w
 =\frac{m_2z(w)^2}{w}
  +\frac{z(w)^3I(z(w))}{w},
 \qquad 0<|w|<R_0.
\end{equation}
For the remaining estimates, assume that $w\ne0$ and write
$z=z(w)$; the case $w=0$ is immediate. Then
\[
 R_\nu(w)-m_2w
 =\frac{m_2(z^2-w^2)}{w}+\frac{z^3I(z)}{w}.
\]
By \eqref{eq:z-w-bound} and $z-w=-\varphi(z)$,
\[
 |z-w|\le8\kappa|w|^3,
 \qquad
 |z+w|\le3|w|.
\]
Using \eqref{eq:moment-rel}, we get
\begin{align*}
 \left|\frac{m_2(z^2-w^2)}{w}\right|
 &\le24m_2\kappa|w|^3
 \le48a_0\beta_3|w|^3
 \le\frac14\beta_3|w|^2,\\
 \left|\frac{z^3I(z)}{w}\right|
 &\le12\beta_3|w|^2.
\end{align*}
Combining these estimates gives \eqref{eq:quant-R}.
\end{proof}

\begin{remark}
The Taylor expansion in \cite{BG} is an asymptotic expansion as $w\to0$ for each fixed measure. In Theorem~\ref{thm:quant-R}, the small support assumption gives a quantitative estimate on a fixed disk that is common to the whole triangular array. This uniformity lets us add the remainder terms directly.
\end{remark}

\section{Truncation and the total \texorpdfstring{$R$}{R}-transform}\label{sec:trunc-R}

Use Theorem~\ref{thm:quant-R} with $R_0=3$, and fix
\begin{equation}\label{eq:constants-a-tau}
 a_0:=\frac1{900},
 \qquad
 \tau:=\frac{a_0}{2\sqrt2}=\frac{1}{1800\sqrt2}.
\end{equation}
In this section, write $\lambda:=L_{3,n}$. If $\lambda>\tau^3$, then
$\Delta(\mu^{(n)},\omega)\le1\le\tau^{-3}\lambda$. Thus, from now on, assume that
\begin{equation}\label{eq:small-lambda}
 \lambda\le\tau^3.
\end{equation}

For each $j$, let $I_n:=[-\tau B_n,\tau B_n]$, and let $\nu_j$ be obtained from $\mu_j$ by moving all mass on $I_n^c$ to zero. Let $Y_j$ have distribution $\nu_j$, and set
\[
 \alpha_j:=\int x\,\nu_j(dx),
 \qquad
 \beta_j^2:=\int x^2\,\nu_j(dx)-\alpha_j^2,
\]
\[
 M_n:=\sum_{j=1}^n\alpha_j,
 \qquad
 N_n^2:=\sum_{j=1}^n\beta_j^2,
 \qquad
 \Gamma_n:=\sum_{j=1}^n\mu_j(I_n^c).
\]
As shown in Lemma~\ref{lem:trunc-est} below, the assumption
\eqref{eq:small-lambda} implies
\[
 N_n^2\ge\frac12B_n^2>0.
\]
Thus, the following normalized measures are well defined.
Let $\widetilde\nu_{n,j}$ be the distribution of $(Y_j-\alpha_j)/N_n$, and define
\[
 \widetilde\nu^{(n)}:=\widetilde\nu_{n,1}\bp\cdots\bp\widetilde\nu_{n,n}.
\]

\begin{lemma}[Basic estimates after truncation]\label{lem:trunc-est}
Under \eqref{eq:small-lambda}, the following estimates hold:
\begin{align}
 &\Gamma_n\le\tau^{-3}\lambda,
 \qquad
 \sum_{j=1}^n|\alpha_j|\le\tau^{-2}B_n\lambda,
 \qquad
 \sup_j|\alpha_j|\le\tau^{-2}B_n\lambda,
 \label{eq:alpha-est}\\
 &\frac12B_n^2\le N_n^2\le B_n^2,
 \qquad
 0\le1-\frac{N_n}{B_n}\le2\tau^{-1}\lambda,
 \label{eq:N-est}\\
 &\supp\widetilde\nu_{n,j}\subset[-a_0,a_0]
 \quad (1\le j\le n),
 \qquad
 \sum_{j=1}^n m_2(\widetilde\nu_{n,j})=1,
 \label{eq:support-var}\\
 &\sum_{j=1}^n\beta_3(\widetilde\nu_{n,j})\le24\lambda.
 \label{eq:third-trunc}
\end{align}
\end{lemma}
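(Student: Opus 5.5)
The estimates in Lemma~\ref{lem:trunc-est} are elementary consequences of Markov/Chebyshev-type inequalities applied to the third absolute moments, combined with the smallness assumption \eqref{eq:small-lambda}. Write $\beta_{3,j}:=\beta_3(\mu_j)$, so that $\sum_j\beta_{3,j}=\lambda B_n^3$. I will derive the estimates in the order they are listed, since each later one uses the earlier ones.

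\emph{Tail mass and means.} On $I_n^c$ we have $|x|>\tau B_n$, so $\ind_{I_n^c}(x)\le |x|^3/(\tau B_n)^3$. Integrating and summing gives $\Gamma_n\le\tau^{-3}\lambda$. Since $\mu_j$ has mean zero,
\[
\alpha_j=\int_{I_n}x\,\mu_j(dx)=-\int_{I_n^c}x\,\mu_j(dx),
\qquad
|\alpha_j|\le\int_{I_n^c}\frac{|x|^3}{(\tau B_n)^2}\,\mu_j(dx).
\]
Summing over $j$ gives $\sum_j|\alpha_j|\le\tau^{-2}B_n\lambda$, and the supremum bound follows because each term is bounded by the sum.

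\emph{Variance.} We have $N_n\le B_n$ by Lemma~\ref{lem:trunc-comparison}. For the lower bound,
\[
B_n^2-N_n^2=\sum_j\int_{I_n^c}x^2\,\mu_j(dx)+\sum_j\alpha_j^2,
\]
and $\int_{I_n^c}x^2\,d\mu_j\le\int|x|^3d\mu_j/(\tau B_n)$. For the squared means, $\sum_j\alpha_j^2\le(\sup_j|\alpha_j|)\sum_j|\alpha_j|\le\tau^{-4}B_n^2\lambda^2\le\tau^{-1}B_n^2\lambda$, using $\lambda\le\tau^3$. Hence $B_n^2-N_n^2\le2\tau^{-1}\lambda B_n^2$. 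Since $2\tau^{-1}\lambda\le2\tau^2<1/2$, this gives $N_n^2\ge\frac12B_n^2$. Finally,
\[
1-\frac{N_n}{B_n}\le1-\frac{N_n^2}{B_n^2}\le2\tau^{-1}\lambda .
\]

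\emph{Support and normalization.} The measure $\nu_j$ is supported in $I_n$, so $|Y_j-\alpha_j|\le\tau B_n+|\alpha_j|\le2\tau B_n$, because $|\alpha_j|\le\tau^{-2}B_n\lambda\le\tau B_n$. Dividing by $N_n\ge B_n/\sqrt2$ gives support in $[-2\sqrt2\,\tau,2\sqrt2\,\tau]=[-a_0,a_0]$, which is exactly why $\tau=a_0/(2\sqrt2)$ was chosen in \eqref{eq:constants-a-tau}. The identity $\sum_jm_2(\widetilde\nu_{n,j})=\sum_j\beta_j^2/N_n^2=1$ is immediate from the definition of $N_n$.

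\emph{Third moments.} This is the only step requiring some care. By convexity, $|a-b|^3\le4(|a|^3+|b|^3)$, so
\[
\int|x-\alpha_j|^3\,\nu_j(dx)\le4\Bigl(\beta_{3,j}+|\alpha_j|^3\Bigr).
\]
Here $\int|x|^3\nu_j(dx)\le\beta_{3,j}$ because moving mass to zero only decreases absolute moments. Jensen's inequality gives $|\alpha_j|^3\le\bigl(\int_{I_n}|x|\,d\mu_j\bigr)^3\le\beta_{3,j}$, so each term is at most $8\beta_{3,j}$. Dividing by $N_n^3\ge B_n^3/(2\sqrt2)$ yields
\[
\sum_j\beta_3(\widetilde\nu_{n,j})\le8\cdot2\sqrt2\,\lambda<24\lambda .
\]
The main obstacle is simply tracking the constants so that $24$ and the support radius $a_0$ come out exactly; the convexity constant $4$ and the factor $2\sqrt2$ fit within the stated bound.
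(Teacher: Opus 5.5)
Your proof is correct and follows the paper's argument essentially step by step. Tail mass and means are handled by Markov's inequality, the lower bound on $N_n$ comes from the identity for $B_n^2-N_n^2$ with $\lambda\le\tau^3$ absorbing the $\tau^{-4}\lambda^2$ term, and the support claim uses $|\alpha_j|\le\tau B_n$, $N_n\ge B_n/\sqrt2$ and $a_0=2\sqrt2\,\tau$. The only deviation is in the third-moment step: you bound each term by Jensen, $|\alpha_j|^3\le\beta_3(\mu_j)$, whereas the paper uses $\sum_j|\alpha_j|^3\le(\sup_j|\alpha_j|)^2\sum_j|\alpha_j|\le\tau^{-6}B_n^3\lambda^3\le B_n^3\lambda$. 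Both routes give $16\sqrt2\,\lambda<24\lambda$, yours without needing the smallness of $\lambda$, and it is the same bound the paper itself uses in Section~7.
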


\begin{proof}
Markov's inequality gives
\[
 \Gamma_n\le(\tau B_n)^{-3}\sum_j\beta_3(\mu_j)=\tau^{-3}\lambda.
\]
Since $\mu_j$ has mean zero,
\[
 \alpha_j=-\int_{I_n^c}x\,\mu_j(dx),
 \qquad
 |\alpha_j|\le(\tau B_n)^{-2}\beta_3(\mu_j).
\]
Summing over $j$ gives \eqref{eq:alpha-est}. Also,
\begin{align*}
 N_n^2
 &=B_n^2-\sum_j\int_{I_n^c}x^2\,\mu_j(dx)-\sum_j\alpha_j^2\\
 &\ge B_n^2\left(1-\tau^{-1}\lambda-\tau^{-4}\lambda^2\right)
 \ge\frac12B_n^2.
\end{align*}
Clearly, $N_n^2\le B_n^2$. Therefore,
\[
 1-\frac{N_n}{B_n}
 \le1-\frac{N_n^2}{B_n^2}
 \le\tau^{-1}\lambda+\tau^{-4}\lambda^2
 \le2\tau^{-1}\lambda.
\]
This proves \eqref{eq:N-est}.

Next,
\[
 \frac{\tau B_n+|\alpha_j|}{N_n}
 \le\sqrt2\left(\tau+\tau^{-2}\lambda\right)
 \le2\sqrt2\tau=a_0.
\]
Thus, the support statement in \eqref{eq:support-var} holds. The sum of the variances is exactly one by the definition of $N_n$. Finally, using
$|x-y|^3\le4(|x|^3+|y|^3)$, we obtain
\begin{align*}
 \sum_j\beta_3(\widetilde\nu_{n,j})
 &\le\frac4{N_n^3}
 \left(\sum_j\beta_3(\mu_j)+\sum_j|\alpha_j|^3\right).
\end{align*}
Moreover,
\[
 \sum_j|\alpha_j|^3
 \le\left(\sup_j|\alpha_j|\right)^2\sum_j|\alpha_j|
 \le\tau^{-6}B_n^3\lambda^3
 \le B_n^3\lambda.
\]
Together with $N_n^{-3}\le2^{3/2}B_n^{-3}$, this gives \eqref{eq:third-trunc}.
\end{proof}

\begin{proposition}[Perturbation estimate for the total $R$-transform]\label{prop:sum-R}
Set $C_1:=312$. Each $R_{\widetilde\nu_{n,j}}$ has an analytic continuation to $|w|<3$, and
\begin{equation}\label{eq:sum-R}
 \left|\sum_{j=1}^nR_{\widetilde\nu_{n,j}}(w)-w\right|
 \le C_1\lambda|w|^2,
 \qquad |w|<3.
\end{equation}
\end{proposition}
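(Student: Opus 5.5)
This is a direct combination of Theorem~\ref{thm:quant-R} (with $R_0=3$) and Lemma~\ref{lem:trunc-est}.

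\textbf{Step 1: the single-summand estimate.} By \eqref{eq:support-var}, each $\widetilde\nu_{n,j}$ is supported in $[-a_0,a_0]$ with $a_0=1/900=1/(100R_0^2)$ for $R_0=3$. By construction, $\widetilde\nu_{n,j}$ is the law of $(Y_j-\alpha_j)/N_n$, so it has mean zero. Hence Theorem~\ref{thm:quant-R} applies to each $\widetilde\nu_{n,j}$. It shows that $R_{\widetilde\nu_{n,j}}$ continues analytically to $\{|w|<3\}$ and that
\[
 \bigl|R_{\widetilde\nu_{n,j}}(w)-m_2(\widetilde\nu_{n,j})w\bigr|\le 13\,\beta_3(\widetilde\nu_{n,j})|w|^2,\qquad |w|<3 .
\]

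\textbf{Step 2: summation.} Summing over $j$ and using the triangle inequality gives
\[
 \Bigl|\sum_{j=1}^n R_{\widetilde\nu_{n,j}}(w)-\Bigl(\sum_{j=1}^n m_2(\widetilde\nu_{n,j})\Bigr)w\Bigr|
 \le 13\Bigl(\sum_{j=1}^n\beta_3(\widetilde\nu_{n,j})\Bigr)|w|^2 .
\]
By \eqref{eq:support-var}, the variances sum to exactly $1$, so the linear term is $w$. By \eqref{eq:third-trunc}, the sum of third absolute moments is at most $24\lambda$. Therefore the right-hand side is at most $13\cdot 24\,\lambda|w|^2=312\lambda|w|^2=C_1\lambda|w|^2$, which is \eqref{eq:sum-R}.

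\textbf{Consistency check.} The only point needing care is that the analytic continuation of each $R_{\widetilde\nu_{n,j}}$ is the one supplied by Theorem~\ref{thm:quant-R}, namely $1/z(w)-1/w$. This agrees with the genuine $R$-transform near $0$. Consequently, the sum of these continuations continues the $R$-transform of $\widetilde\nu^{(n)}$ near $0$ by \eqref{eq:R-add}, which is what later sections will use. No real obstacle is expected; the work was done in Theorem~\ref{thm:quant-R} and Lemma~\ref{lem:trunc-est}.
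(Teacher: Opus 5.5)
Your proposal is correct and is exactly the paper's argument: apply Theorem~\ref{thm:quant-R} with $R_0=3$ to each centered measure $\widetilde\nu_{n,j}$, which is supported in $[-a_0,a_0]$. Then sum over $j$, using $\sum_j m_2(\widetilde\nu_{n,j})=1$ and $\sum_j\beta_3(\widetilde\nu_{n,j})\le 24\lambda$ from Lemma~\ref{lem:trunc-est}, which gives the constant $13\cdot 24=312$.
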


\begin{proof}
By Lemma~\ref{lem:trunc-est} and Theorem~\ref{thm:quant-R},
\begin{align*}
 \left|\sum_jR_{\widetilde\nu_{n,j}}(w)-w\right|
 &=\left|\sum_j\{R_{\widetilde\nu_{n,j}}(w)
 -m_2(\widetilde\nu_{n,j})w\}\right|\\
 &\le13|w|^2\sum_j\beta_3(\widetilde\nu_{n,j})
 \le312\lambda|w|^2.
\end{align*}
\end{proof}

\section{Comparison of the Cauchy transforms}\label{sec:Cauchy}

Let $G:=G_{\widetilde\nu^{(n)}}$, and set
\begin{equation}\label{eq:def-e}
 e(w):=\sum_{j=1}^nR_{\widetilde\nu_{n,j}}(w)-w,
 \qquad
 |e(w)|\le C_1\lambda|w|^2
 \quad(|w|<3).
\end{equation}
Also set
\begin{equation}\label{eq:C2}
 C_2:=128C_1.
\end{equation}

\begin{lemma}[Global perturbation equation]\label{lem:global-eq}
If $z\in\Cp$ and $\im z>8C_1\lambda$, then
\begin{equation}\label{eq:G-less-2}
 |G(z)|<2,
\end{equation}
and
\begin{equation}\label{eq:global-eq}
 z=\frac1{G(z)}+G(z)+e(G(z)).
\end{equation}
\end{lemma}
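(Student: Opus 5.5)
The plan is to introduce the analytic function
\[
 K(w):=\frac1w+w+e(w),\qquad 0<|w|<3,
\]
and to run a connectedness (continuity) argument on the half-plane $\Omega:=\{z\in\Cp:\im z>8C_1\lambda\}$. By Proposition~\ref{prop:sum-R} and \eqref{eq:def-e}, $K$ is analytic on the punctured disk. Near $w=0$ it coincides with $K_{\widetilde\nu^{(n)}}$. Indeed, $\widetilde\nu^{(n)}$ is compactly supported, so $G$ is injective near infinity with inverse $K_{\widetilde\nu^{(n)}}$. By the additivity \eqref{eq:R-add}, $R_{\widetilde\nu^{(n)}}=\sum_jR_{\widetilde\nu_{n,j}}$ near $0$, hence $K_{\widetilde\nu^{(n)}}(w)=1/w+w+e(w)$ for small $w$. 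Since $|G(z)|\le(\im z)^{-1}$ and $G(z)\to0$ as $z\to\infty$, we obtain $z=K(G(z))$ and $|G(z)|<2$ for all $z\in\Omega$ with $\im z$ sufficiently large. This gives the starting set.

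The key a priori step, which I expect to carry the whole argument, is that the equation $z=K(w)$ with $z\in\Omega$ is incompatible with $|w|=2$. Write $w=a-ib$. We have $b>0$, because $G(\Cp)\subset\Cm$ (strictly, so $G(z)\neq0$ as well). Then $\im(1/w)=b/|w|^2$ and $\im w=-b$, so on $|w|=2$
\[
 \im K(w)\le b\Bigl(\frac1{|w|^2}-1\Bigr)+|e(w)|\le-\frac34b+4C_1\lambda<8C_1\lambda .
\]
Hence, if $z\in\Omega$ satisfies $z=K(G(z))$ and $|G(z)|\le2$, then in fact $|G(z)|<2$.

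Next I would set $U:=\{z\in\Omega:|G(z)|<2\}$, which is open, and $F(z):=z-K(G(z))$, which is analytic on $U$. Let $S$ be the set of points of $U$ near which $F$ vanishes identically. Then $S$ is open by definition and nonempty by the first step. To show $S$ is relatively closed in $\Omega$, take $z_0\in\Omega$ that is a limit of points $z_k\in S$. By continuity of $G$ we get $|G(z_0)|\le2<3$, and by continuity of $e$ on $|w|<3$ we get $z_0=K(G(z_0))$. The a priori step then gives $|G(z_0)|<2$, so $z_0\in U$. Moreover $F$ vanishes on open sets accumulating at $z_0$ inside the connected component of $U$ containing $z_0$, so the identity theorem gives $F\equiv0$ near $z_0$, i.e.\ $z_0\in S$.

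Since $\Omega$ is connected, $S=\Omega$, which yields both \eqref{eq:G-less-2} and \eqref{eq:global-eq}. The main care is needed at the closedness step: strict inequality $|G(z_0)|<2$ must come from the imaginary-part computation rather than from continuity alone.
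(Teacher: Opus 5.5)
Your proof is correct and follows essentially the same approach as the paper: you continue $K$ to $1/w+w+e(w)$ on $0<|w|<3$, propagate $z=K(G(z))$ from a neighborhood of infinity by the identity theorem, and exclude $|G(z)|=2$ inside the region because $\im(1/w+w)=\tfrac34\im w<0$ on $|w|=2$ while $|e(w)|\le 4C_1\lambda$. The only difference is packaging: the paper works on the connected component of $\{z\in\Cp:|G(z)|<3\}$ containing a neighborhood of infinity and uses a largest-crossing argument along vertical rays, whereas you run an open--closed argument directly on the half-plane $\{\im z>8C_1\lambda\}$.
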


\begin{proof}
Each $\widetilde\nu_{n,j}$ is compactly supported, and so is $\widetilde\nu^{(n)}$. Let $K$ be the inverse branch of $G$ at infinity. By \eqref{eq:R-add} and uniqueness of analytic continuation,
\[
 \widetilde K(w):=\frac1w+w+e(w),
 \qquad 0<|w|<3,
\]
is an analytic continuation of $K$. Thus, when $\im z$ is large enough,
$\widetilde K(G(z))=z$.

Let $\Omega:=\{z\in\Cp:|G(z)|<3\}$, and let $V$ be the
connected component of $\Omega$ that contains all points in the
upper half-plane with sufficiently large imaginary part. For
$z\in\Cp$, we have
\[
 \im G(z)
 =-(\im z)\int_{\R}\frac{1}{|z-x|^2}\,
 \widetilde\nu^{(n)}(dx)<0.
\]
In particular, $G(z)\ne0$. Therefore, the function
$\widetilde K(G(z))-z$ is analytic on $V$. By the identity theorem,
it is zero on all of $V$.

Suppose now that \eqref{eq:global-eq} holds and that $w:=G(z)$ satisfies $|w|=2$. Since $\im w<0$,
\[
 \im\left(\frac1w+w\right)
 =\im w\left(1-\frac1{|w|^2}\right)
 =\frac34\im w<0.
\]
Also, $|e(w)|\le4C_1\lambda$. Therefore,
\begin{equation}\label{eq:circle-exclusion}
 |G(z)|=2\ \text{and \eqref{eq:global-eq} holds}
 \quad\Longrightarrow\quad
 \im z\le4C_1\lambda.
\end{equation}

Fix $x\in\R$. We have $|G(x+i)|\le1$. Assume that there
exists $y_1\in(8C_1\lambda,1)$ such that
$|G(x+iy_1)|\ge2$. By continuity, the set
\[
 \{y\in[y_1,1]:|G(x+iy)|=2\}
\]
is nonempty. Let $y_0$ be its largest element. Then
\[
 |G(x+iy_0)|=2,
 \qquad
 |G(x+iy)|<2
 \quad (y_0<y\le1).
\]
For $y>1$, we have $|G(x+iy)|\le1/y<1$. Consequently,
the vertical ray
\[
 \{x+iy:y\ge y_0\}
\]
is contained in $\Omega$ and connects $x+iy_0$ to the upper
part of $V$. Hence $x+iy_0\in V$, and
\eqref{eq:global-eq} holds at this point. Since
$y_0>8C_1\lambda$, this contradicts
\eqref{eq:circle-exclusion}.

Therefore,
\[
 |G(x+iy)|<2
 \qquad (8C_1\lambda<y\le1).
\]
For $y>1$, we already have $|G(x+iy)|<1$. This proves
\eqref{eq:G-less-2} in the whole region
$\{z\in\Cp:\im z>8C_1\lambda\}$. Moreover, the vertical ray
from each point of this region to infinity is contained in
$\Omega$. Hence the whole region is contained in $V$, and
\eqref{eq:global-eq} holds throughout it.
\end{proof}

\begin{proposition}[Pointwise comparison with the semicircle Cauchy transform]\label{prop:pointwise}
If $z\in\Cp$ and $\im z\ge C_2\lambda$, then
\begin{equation}\label{eq:pointwise}
 |G(z)-G_\omega(z)|
 \le\frac{16C_1\lambda}{|s(z)|}.
\end{equation}
\end{proposition}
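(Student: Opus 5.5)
The plan is to turn Lemma~\ref{lem:global-eq} into a quadratic equation for the difference $d(z):=G(z)-G_\omega(z)$, then use a continuity argument to select the small root.

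\emph{Step 1: the quadratic equation.} Since $C_2=128C_1>8C_1$, Lemma~\ref{lem:global-eq} applies whenever $\im z\ge C_2\lambda$. Multiplying \eqref{eq:global-eq} by $G=G(z)$ gives
\[
G^2-zG+1=-G\,e(G).
\]
Subtracting the semicircle relation \eqref{eq:semi-eq}, $G_\omega^2-zG_\omega+1=0$, and factoring yields
\[
d\,(G+G_\omega-z)=-G\,e(G).
\]
By \eqref{eq:semi-eq}, $G+G_\omega-z=d+(2G_\omega-z)=d-s$. Hence
\[
d\,(d-s)=\epsilon,\qquad \epsilon:=-G\,e(G).
\]
From \eqref{eq:G-less-2} and \eqref{eq:def-e} we get $|\epsilon|\le |G|\cdot C_1\lambda|G|^2\le 8C_1\lambda$.

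\emph{Step 2: the conclusion on the good branch.} Suppose $|d|\le |s|/2$ at some point. Then $|d-s|\ge |s|/2$, so
\[
|d|\,\frac{|s|}{2}\le |\epsilon|\le 8C_1\lambda,
\]
which is exactly \eqref{eq:pointwise}. It therefore suffices to show that $|d(z)|<|s(z)|/2$ on the whole half-plane $\{\im z\ge C_2\lambda\}$.

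\emph{Step 3: the boundary case cannot occur.} Suppose $|d|=|s|/2$ at some point. Then also $|d-s|\ge|s|/2$, so $|s|^2/4\le|\epsilon|\le 8C_1\lambda$, that is, $|s|^2\le 32C_1\lambda$. To rule this out we need a lower bound on $|s(z)|^2=|z-2|\,|z+2|$ in terms of $y=\im z$. Both factors are at least $y$, and one of them is at least $2$, because $|z-2|+|z+2|\ge4$. Hence
\[
|s|^2\ge \min(2y,\,y^2)\ge 2y\quad (y\le 2),\qquad |s|^2\ge y^2\ge 2y\quad(y\ge2),
\]
so $|s|^2\ge 2y$ in all cases. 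For $y\ge C_2\lambda=128C_1\lambda$ this gives $|s|^2\ge 256C_1\lambda>32C_1\lambda$, a contradiction. (If $\lambda=0$ the perturbation vanishes, and the same argument applies verbatim.)

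\emph{Step 4: connectedness.} The function $z\mapsto |d(z)|-|s(z)|/2$ is continuous on the connected set $\{\im z\ge C_2\lambda\}$, and by Step~3 it never vanishes there. As $\im z\to\infty$ we have $|d|\le|G|+|G_\omega|\le 2/\im z\to0$ while $|s(z)|\to\infty$, so the function is negative far up. Therefore it is negative everywhere on the region, and Step~2 gives the claim.

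The main obstacle is Step~3, the lower bound $|s(z)|^2\ge 2\im z$, which must hold uniformly in $\re z$, including near the edges $\pm2$. This lower bound is precisely what makes the threshold $C_2=128C_1$ compatible with the constant $16C_1$ in \eqref{eq:pointwise}. The remaining steps are direct algebra together with the branch-selection continuity argument.
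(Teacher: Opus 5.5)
Your argument is correct and is essentially the paper's: you use the same quadratic $d^2-sd-q=0$ with $|q|\le 8C_1\lambda$, the same bound $|s(z)|^2\ge 2\im z$, and continuity from $z=iY$, $Y\to\infty$, to select the small root; the only difference is that you rule out $|d|=|s|/2$ directly instead of writing the roots $d_\pm=\frac{s}{2}(1\pm\sqrt{1+r})$ with an explicit branch, which neatly avoids the branch choice. One small slip: in Step~3 the inequality $\min(2y,y^2)\ge 2y$ is false for $0<y<2$, but $|s|^2\ge 2y$ follows immediately from your preceding sentence (the factor that is at least $2$ times the other factor, which is at least $y$), so nothing downstream is affected.
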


\begin{proof}
By Lemma~\ref{lem:global-eq}, we have $|G|<2$. Multiplying \eqref{eq:global-eq} by $G$ gives
\begin{equation}\label{eq:perturbed-quadratic}
 G(z)^2-zG(z)+1=q(z),
 \qquad
 q(z):=-G(z)e(G(z)),
 \qquad
 |q(z)|\le8C_1\lambda.
\end{equation}
Let $d:=G-G_\omega$. Subtracting \eqref{eq:semi-eq} from \eqref{eq:perturbed-quadratic}, we get
\begin{equation}\label{eq:d-equation}
 d(z)^2-s(z)d(z)-q(z)=0.
\end{equation}

For every $z=x+iy\in\Cp$,
\begin{equation}\label{eq:s-lower}
 |s(z)|^2=|z-2|\,|z+2|\ge2y.
\end{equation}
Indeed, both factors are at least $y$, and the triangle inequality shows that at least one of them is at least $2$. Thus, if $y\ge C_2\lambda$,
\[
 \left|\frac{4q(z)}{s(z)^2}\right|
 \le\frac{16C_1}{C_2}=\frac18.
\]
Consider the half-plane
\[
 D:=\{z\in\Cp:\im z>C_2\lambda\},
\]
and set
\[
 r(z):=\frac{4q(z)}{s(z)^2}.
\]
On $D$, we have $|r(z)|<1/8$. Hence $1+r(z)$ belongs to
the disk $\{\zeta\in\C:|\zeta-1|<1/8\}$, which is contained
in the right half-plane. We therefore choose the analytic branch
of $\sqrt{1+r(z)}$ characterized by
\[
 \Re\sqrt{1+r(z)}>0.
\]

The two analytic roots of \eqref{eq:d-equation} on $D$ are
\[
 d_\pm(z)=\frac{s(z)}2
 \left(1\pm\sqrt{1+r(z)}\right).
\]
By rationalizing the expression for the small root, we have
\[
 d_-(z)
 =-\frac{2q(z)}
 {s(z)\left(1+\sqrt{1+r(z)}\right)}.
\]
Since $\Re\sqrt{1+r(z)}>0$, we have
$|1+\sqrt{1+r(z)}|\ge1$. Therefore,
\begin{equation}\label{eq:small-root}
 |d_-(z)|\le\frac{2|q(z)|}{|s(z)|}.
\end{equation}
On the other hand,
\[
 |d_+(z)|
 =\frac{|s(z)|}{2}|1+\sqrt{1+r(z)}|
 \ge\frac{|s(z)|}{2}.
\]
Moreover,
\[
 |d_-(z)|
 \le\frac{|s(z)|}{2}|r(z)|
 <\frac{|s(z)|}{16}
 <\frac{|s(z)|}{4}.
\]
Thus, the two roots are separated on $D$.

As $Y\to\infty$,
$G(iY)\sim(iY)^{-1}$ and $G_\omega(iY)\sim(iY)^{-1}$,
so $d(iY)$ is the small root $d_-(iY)$. Since $D$ is connected
and the two roots are separated, continuity implies that
$d=d_-$ throughout $D$. The same conclusion holds on the boundary
$\im z=C_2\lambda$ by continuity. From
\eqref{eq:small-root} and \eqref{eq:perturbed-quadratic},
\[
 |d(z)|\le\frac{2|q(z)|}{|s(z)|}
 \le\frac{16C_1\lambda}{|s(z)|}.
\]
This proves \eqref{eq:pointwise}.
\end{proof}

\begin{lemma}[Integral estimates for smoothing]\label{lem:integrals}
If $C_2\lambda\le v\le1$, then, for every $x\in\R$,
\begin{equation}\label{eq:vertical}
 \int_v^1|G(x+iy)-G_\omega(x+iy)|\,dy
 \le16\sqrt2\,C_1\lambda.
\end{equation}
Also,
\begin{equation}\label{eq:horizontal}
 \int_{\R}|G(u+i)-G_\omega(u+i)|\,du
 \le100C_1\lambda.
\end{equation}
\end{lemma}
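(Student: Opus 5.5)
Both estimates follow from the pointwise bound of Proposition~\ref{prop:pointwise}, which applies whenever $\im z\ge C_2\lambda$. In particular it applies on the whole segment $\{x+iy: v\le y\le 1\}$ and on the horizontal line $\im z=1$, because $C_2\lambda\le 1$. That inequality holds since $\lambda\le\tau^3$ by \eqref{eq:small-lambda} and $\tau^3$ is far smaller than $1/C_2=1/(128\cdot312)$. So in both cases it remains to integrate $16C_1\lambda/|s(z)|$ along the relevant path.

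\emph{Vertical estimate \eqref{eq:vertical}.} By \eqref{eq:s-lower} we have $|s(x+iy)|\ge\sqrt{2y}$. Hence
\[
 \int_v^1|G(x+iy)-G_\omega(x+iy)|\,dy\le 16C_1\lambda\int_0^1\frac{dy}{\sqrt{2y}}
 =16C_1\lambda\cdot\frac{2}{\sqrt2}=16\sqrt2\,C_1\lambda .
\]
This is exactly the claimed constant, and it holds uniformly in $x$. The singularity $y^{-1/2}$ is integrable, so the lower limit $v$ plays no role.

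\emph{Horizontal estimate \eqref{eq:horizontal}.} Here the pointwise bound $16C_1\lambda/|s(u+i)|$ decays only like $1/|u|$, which is not integrable, so the proof needs a second bound for large $|u|$. For $|u|$ large I would instead use the decay of the difference itself. One route is to return to the quadratic \eqref{eq:d-equation} and use $|q|\le |G|^3 C_1\lambda$, since $|e(w)|\le C_1\lambda|w|^2$. Then $|q(z)|\le C_1\lambda|G(z)|^3$, so the small root satisfies
\[
|d|\le \frac{2C_1\lambda|G(z)|^3}{|s(z)|}.
\]
It remains to bound $|G(u+i)|$ by something like $C/|u|$ for large $|u|$, and this step is the main obstacle. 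The measure $\widetilde\nu^{(n)}$ has variance $1$ and mean $0$, so Chebyshev gives control. A cleaner route is the lower bound $|G(z)|\le 2/|z|$, which I would derive from \eqref{eq:global-eq}: with $w=G(z)$ and $|w|<2$, we get $|z|\le 1/|w|+|w|+4C_1\lambda$. When $|z|\ge 5$ this forces $|w|$ small, giving $|1/w|\ge |z|-3$, hence $|w|\le 2/|z|$.

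With that in hand, the integrand is bounded by $16C_1\lambda/|s|\le 16\sqrt{2}\,C_1\lambda/\sqrt{|u|}$ on $|u|\le 5$, since $|s(u+i)|^2\ge\max(2,\ldots)$. On $|u|>5$ it is bounded by $2C_1\lambda\cdot 8|u|^{-3}/|s|$, using $|s(u+i)|\gtrsim |u|$, and this is integrable. Adding the two pieces gives $\le100C_1\lambda$ after checking the numerics, which I expect to work with room to spare: the central part contributes at most about $10\cdot16C_1\lambda/\sqrt2$ if we crudely use $|s|\ge\sqrt2$. If that crude count exceeds $100$, I would refine the central part by integrating $|s(u+i)|^{-1}=(|u-2+i||u+2+i|)^{-1/2}$ exactly.
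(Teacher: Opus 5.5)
Your vertical estimate is exactly the paper's argument. Your horizontal strategy is also the paper's: split at $|u|=5$, use \eqref{eq:global-eq} with $|G|<2$ and $4C_1\lambda<1$ to get $1/|G(u+i)|\ge|u|-3$, then combine $|q|\le C_1\lambda|G|^3$, the small-root bound \eqref{eq:small-root}, and $|s(u+i)|\gtrsim|u|$. The tail piece is fine and contributes far less than $20C_1\lambda$.

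As written, however, you do not reach the constant $100C_1\lambda$ in \eqref{eq:horizontal}, because the central piece $|u|\le5$ is never closed.
\begin{itemize}
\item Your bound $16\sqrt2\,C_1\lambda/\sqrt{|u|}$ integrates to $64\sqrt{10}\,C_1\lambda\approx202\,C_1\lambda$.
\item The crude bound $|s|\ge\sqrt2$ from \eqref{eq:s-lower} gives $80\sqrt2\,C_1\lambda\approx113\,C_1\lambda$.
\end{itemize}
Both exceed $100C_1\lambda$, and the fallback ``integrate $|s(u+i)|^{-1}$ exactly'' is only announced, not carried out. The missing observation is a one-line identity on the line $\im z=1$:
\[
 |s(u+i)|^4=|(u+i)^2-4|^2=(u^2-5)^2+4u^2=(u^2-3)^2+16\ge16 .
\]
Hence $|s(u+i)|\ge2$, so the central part is at most $10\cdot8C_1\lambda=80C_1\lambda$, and adding the tail gives $100C_1\lambda$. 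Carrying out the exact integration would also work; it gives roughly $62\,C_1\lambda$.

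There is also a smaller slip in the tail step. From $1/|G(z)|\ge|z|-3$ you cannot conclude $|G(z)|\le2/|z|$ for $5\le|z|<6$, since that needs $|z|-3\ge|z|/2$, i.e.\ $|z|\ge6$. (Also, this is an upper bound on $|G|$, not a ``lower bound'' as you wrote.) There are two easy fixes:
\begin{itemize}
\item Bootstrap. First $|G|\le1/2$; reinserting this into \eqref{eq:global-eq} gives $1/|G|\ge|z|-0.6\ge|z|/2$.
\item Do as the paper does: keep $|G(u+i)|\le(|u|-3)^{-1}$ and bound $\int_5^\infty du/\bigl(u(u-3)^3\bigr)$ directly.
\end{itemize}
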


\begin{proof}
By \eqref{eq:s-lower} and Proposition~\ref{prop:pointwise},
\[
 \int_v^1|G(x+iy)-G_\omega(x+iy)|\,dy
 \le16C_1\lambda\int_0^1\frac{dy}{\sqrt{2y}}
 =16\sqrt2\,C_1\lambda.
\]
This proves \eqref{eq:vertical}.

Next, let $z=u+i$. Then
\[
 |s(u+i)|\ge2.
\]
Indeed,
$|s(u+i)|^4=|(u+i)^2-4|^2=(u^2-5)^2+4u^2\ge16$.
Thus, on $|u|\le5$, Proposition~\ref{prop:pointwise} shows that the contribution to the integral is at most $80C_1\lambda$.

Since $C_2\lambda\le1$ and $C_2=128C_1$, we have
$4C_1\lambda\le1/32<1$.
For $|u|>5$, \eqref{eq:global-eq} and $|G|<2$ give
\[
 \frac1{|G(u+i)|}
 \ge |u|-2-4C_1\lambda
 \ge |u|-3.
\]
Therefore,
\[
 |q(u+i)|\le\frac{C_1\lambda}{(|u|-3)^3}.
\]
Also, $|s(u+i)|^2\ge u^2-3\ge0.8u^2$. Hence, using \eqref{eq:small-root},
\begin{align*}
 \int_{|u|>5}|G(u+i)-G_\omega(u+i)|\,du
 &\le \frac{4C_1\lambda}{\sqrt{0.8}}
 \int_5^\infty\frac{du}{u(u-3)^3}\\
 &\le20C_1\lambda.
\end{align*}
Combining the two parts proves \eqref{eq:horizontal}.
\end{proof}

\begin{remark}
Near the endpoints $\pm2$, the factor $|s(z)|^{-1}$ becomes large. However, if we use \eqref{eq:s-lower} inside the integral, then
$\int_v^1y^{-1/2}\,dy$ is bounded uniformly in $v$. Therefore, the square-root singularity in the pointwise estimate near the endpoints does not cause any additional loss in $L_{3,n}$ in the vertical integral of the Bai-type inequality.
\end{remark}

\section{Proof of Theorem~\ref{thm:main-1}}

Set $\lambda=L_{3,n}$. If $\lambda>\tau^3$, the result is trivial, as explained above. Thus, assume \eqref{eq:small-lambda}.

Apply Lemma~\ref{lem:bai} to $\nu=\widetilde\nu^{(n)}$. Since $\widetilde\nu^{(n)}$ is compactly supported, it satisfies the integrability assumption in that lemma. Choose
\[
 h:=C_2\lambda,
 \qquad
 \rho:=2,
 \qquad
 \gamma:=\frac1\pi\int_{|x|<2}\frac{dx}{1+x^2}
 =\frac{2}{\pi}\arctan2>\frac12,
 \qquad
 \varepsilon:=5h.
\]
By our choice of constants and \eqref{eq:small-lambda}, we have $h\in(0,1)$ and $\varepsilon>2h\rho$. Lemma~\ref{lem:integrals} and \eqref{eq:bai} give
\begin{align*}
 \Delta(\widetilde\nu^{(n)},\omega)
 &\le C_\gamma\left(
 100C_1\lambda+16\sqrt2\,C_1\lambda
 +\frac{16h}{\pi}+\gamma(5h)^{3/2}
 \right)\\
 &\le C\lambda.
\end{align*}
Here we used $h=C_2\lambda<1$.

Finally, apply Lemma~\ref{lem:trunc-comparison} with $T=\tau B_n$. By Lemma~\ref{lem:trunc-est},
\[
 \Gamma_n\le\tau^{-3}\lambda,
 \qquad
 \frac{|M_n|}{N_n}\le\sqrt2\,\tau^{-2}\lambda,
\]
and
\[
 \frac{B_n}{N_n}-1
 =\frac{B_n}{N_n}\left(1-\frac{N_n}{B_n}\right)
 \le2\sqrt2\,\tau^{-1}\lambda.
\]
It follows that
\[
 \Delta(\mu^{(n)},\omega)
 \le C\lambda.
\]
Since $\tau$ is a fixed absolute constant, $C$ is also an absolute constant.

%\begin{proof}[Proof of Corollary~\ref{cor:iid}]
%We have $B_n^2=n\sigma^2$ and
%$\sum_{j=1}^n\beta_3(\mu_j)=n\beta_3(\mu)$. Hence
%\[
% L_{3,n}=\frac{\beta_3(\mu)}{\sigma^3\sqrt n}.
%\]
%The result follows from Theorem~\ref{thm:main}.
%\end{proof}

\begin{remark}[Comparison of the proof methods]
In this paper, we add the $R$-transforms of the truncated summands directly on a common disk. This gives one perturbed quadratic equation for the Cauchy transform of the truncated sum. Thus, we do not need separate estimates for subordination functions or an iterative improvement of exponents, which are important in \cite{Neufeld2024,Neufeld2025}. The third absolute moments control both the truncation error and the $R$-transform remainder through the same linear quantity $L_{3,n}$.
\end{remark}

\section{Proof of Theorem~\ref{thm:main}}

Let us recall the idea in our paper \cite{MS}. There, to obtain Berry--Esseen-type estimates under moment assumptions weaker than finiteness of the third moment, we used Theorem~2.6 in \cite{CG}, which states that, if $\beta_3(\mu_j)<\infty$ for all $j=1,2,\ldots,n$, then there exists an absolute constant $C>0$ such that
\[
 \Delta(\mu^{(n)},\omega)\le C L_{3,n}^{1/2}.
\]
However, we have now proved the better estimate in Theorem~\ref{thm:main-1}:
\begin{equation}\label{eq:section7-main}
 \Delta(\mu^{(n)},\omega)\le C L_{3,n}.
\end{equation}

In this section, unless a stronger moment condition is stated, we assume only that the measures $\mu_1,\ldots,\mu_n$ have mean zero and finite variances, with $B_n^2:=\sum_{j=1}^n m_2(\mu_j)>0$, and we define $\mu^{(n)}$ as in Section~\ref{sec:intro}.
As in Theorem~3.1 of \cite{MS}, let
\begin{equation}\label{eq:Lambda-ell}
 \Lambda_n:=\frac1{B_n^2}\sum_{j=1}^n
 \int_{|x|>\tau B_n}x^2\,\mu_j(dx),
 \qquad
 \Upsilon_n:=\frac1{B_n^3}\sum_{j=1}^n
 \int_{|x|\le\tau B_n}|x|^3\,\mu_j(dx),
\end{equation}
where $\tau$ is the fixed absolute constant defined in \eqref{eq:constants-a-tau}.

We first show the main part of the proof of the theorem as a lemma.

\begin{lemma}\label{lem:weaker-moment}
There exists an absolute constant $C>0$ such that
\begin{equation}\label{eq:Lambda-ell-bound}
 \Delta(\mu^{(n)},\omega)\le C(\Lambda_n+\Upsilon_n).
\end{equation}
\end{lemma}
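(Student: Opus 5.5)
We prove Lemma~\ref{lem:weaker-moment} by truncating at the level $\tau B_n$ and then applying Theorem~\ref{thm:main-1} to the truncated array. The natural route is to re-run the argument of Section~6 with the truncation of Section~4, replacing each use of $\beta_3(\mu_j)$ by the two pieces $\int_{|x|>\tau B_n}$ and $\int_{|x|\le\tau B_n}$. First dispose of the trivial case: if $\Lambda_n+\Upsilon_n>c_0$ for a suitable small absolute constant $c_0$ (say $c_0=\tau^3$), then $\Delta\le1\le c_0^{-1}(\Lambda_n+\Upsilon_n)$. So assume $\Lambda_n+\Upsilon_n\le c_0$.

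Define $\nu_j$, $Y_j$, $\alpha_j$, $\beta_j^2$, $M_n$, $N_n$, $\Gamma_n$ exactly as in Section~4, with $I_n=[-\tau B_n,\tau B_n]$, and apply Lemma~\ref{lem:trunc-comparison} with $T=\tau B_n$. The error terms are redone with second moments on the tail.

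\begin{itemize}
\item Chebyshev's inequality on the tail gives $\Gamma_n\le(\tau B_n)^{-2}\sum_j\int_{I_n^c}x^2\,d\mu_j=\tau^{-2}\Lambda_n$.
\item Since $\mu_j$ is centered, $|\alpha_j|=\bigl|\int_{I_n^c}x\,d\mu_j\bigr|\le(\tau B_n)^{-1}\int_{I_n^c}x^2\,d\mu_j$. Hence $\sum_j|\alpha_j|\le\tau^{-1}B_n\Lambda_n$, and also $\sup_j|\alpha_j|\le\tau^{-1}B_n\Lambda_n$.
\item For the variance, $N_n^2=B_n^2-\sum_j\int_{I_n^c}x^2\,d\mu_j-\sum_j\alpha_j^2\ge B_n^2(1-\Lambda_n-\tau^{-2}\Lambda_n^2)$. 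This is at least $B_n^2/2$, and it gives $1-N_n/B_n\le2\Lambda_n$.
\end{itemize}

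So the three extra terms in \eqref{eq:trunc-comparison} are $O(\Lambda_n)$.

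It remains to bound $\Delta(\widetilde\nu^{(n)},\omega)$. The measures $\widetilde\nu_{n,j}$ are centered and their variances sum to one, so Theorem~\ref{thm:main-1}, applied to this array with normalizing constant $1$, gives
\[
\Delta(\widetilde\nu^{(n)},\omega)\le C\sum_j\beta_3(\widetilde\nu_{n,j}).
\]
To bound the right side, use $|x-y|^3\le4(|x|^3+|y|^3)$ together with $N_n^{-3}\le2^{3/2}B_n^{-3}$:
\[
\sum_j\beta_3(\widetilde\nu_{n,j})\le\frac{4\cdot2^{3/2}}{B_n^3}\Bigl(\sum_j\int_{I_n}|x|^3\,d\mu_j+\sum_j|\alpha_j|^3\Bigr).
\]
The mass moved to zero contributes nothing to the third moment, so the first sum is exactly $B_n^3\Upsilon_n$. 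For the second,
\[
\sum_j|\alpha_j|^3\le\Bigl(\sup_j|\alpha_j|\Bigr)^2\sum_j|\alpha_j|\le\tau^{-3}B_n^3\Lambda_n^3\le B_n^3\Lambda_n,
\]
using the smallness assumption. Combining everything yields $\Delta(\mu^{(n)},\omega)\le C(\Lambda_n+\Upsilon_n)$.

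The step that needs the most care is the estimate on $\alpha_j$. Unlike Section~4, we can only use second moments on the tail, so we must check that the shift bound linear in $\Lambda_n$ is enough for both the $|M_n|/N_n$ term and the $\sum_j|\alpha_j|^3$ term. The cubic bound above shows that it is. Applying Theorem~\ref{thm:main-1} to the truncated array requires no support condition, since that theorem holds for arbitrary centered measures with finite third moments, and the truncated measures are bounded.
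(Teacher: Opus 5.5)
Your proof is correct and follows the paper's route: truncate at $\tau B_n$, bound $\Gamma_n$, $|M_n|/N_n$ and $B_n/N_n-1$ by $O(\Lambda_n)$ through Lemma~\ref{lem:trunc-comparison}, and apply Theorem~\ref{thm:main-1} to the centered, normalized truncated array. The differences are cosmetic. You reduce to $\Lambda_n+\Upsilon_n\le\tau^3$, whereas the paper splits on $N_n^2\le B_n^2/4$ using $\alpha_j^2\le\int_{I_n^c}x^2\,d\mu_j$. You bound the centered third moments via $|x-y|^3\le4(|x|^3+|y|^3)$ plus the cubic $\alpha_j$ estimate, whereas the paper uses $\int|x-\alpha_j|^3\,d\nu_j\le8\int|x|^3\,d\nu_j$ directly. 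One small omission: Theorem~\ref{thm:main-1} assumes $\sigma_j^2>0$, so you should first discard any summands with $\widetilde\nu_{n,j}=\delta_0$, as the paper does; this is harmless because they are neutral for $\bp$ and contribute nothing to either sum.
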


\begin{proof}

Use the truncation interval $I_n=[-\tau B_n,\tau B_n]$ and define $\nu_j$, $\alpha_j$, $M_n$, $N_n$, and $\Gamma_n$ as in Section~\ref{sec:trunc-R}.
When $N_{n}>0$, define $\widetilde\nu_{n,j}$ and $\widetilde\nu^{(n)}$ as in that section. 
We separate two cases.

First suppose that $N_n^2\le B_n^2/4$. Since
\[
 B_n^2-N_n^2
 =\sum_{j=1}^n\int_{I_n^c}x^2\,\mu_j(dx)
  +\sum_{j=1}^n\alpha_j^2
 \le 2\sum_{j=1}^n\int_{I_n^c}x^2\,\mu_j(dx)
 =2B_n^2\Lambda_n,
\]
where $\alpha_j^2\le\int_{I_n^c}x^2\,\mu_j(dx)$, we obtain $\Lambda_n\ge3/8$. Hence
\[
 \Delta(\mu^{(n)},\omega)\le1\le\frac83\Lambda_n.
\]
This case also covers the possibility $N_n=0$.

Next suppose that $N_n^2>B_n^2/4$. Then $N_n>B_n/2$, and Lemma~\ref{lem:trunc-comparison} is applicable. Put
\[
 \Delta_n:=\Delta(\widetilde\nu^{(n)},\omega).
\]
After omitting any degenerate summands, Theorem~\ref{thm:main-1} applied to the centered and normalized truncated measures gives
\begin{align*}
 \Delta_n
 &\le \frac{C}{N_n^3}\sum_{j=1}^n
       \int_{\R}|x-\alpha_j|^3\,\nu_j(dx)\\
 &\le \frac{8C}{N_n^3}\sum_{j=1}^n
       \int_{I_n}|x|^3\,\mu_j(dx)
 \le 64C\Upsilon_n.
\end{align*}
Here we used
$\int|x-\alpha_j|^3\,\nu_j(dx)\le8\int|x|^3\,\nu_j(dx)$.
Moreover,
\begin{align}
 \Gamma_n&\le\tau^{-2}\Lambda_n,\notag\\
 \frac{|M_n|}{N_n}&\le 2\tau^{-1}\Lambda_n,\notag\\
 \frac{B_n}{N_n}-1
 &=\frac{B_n^2-N_n^2}{N_n(B_n+N_n)}
 \le\frac{2(B_n^2-N_n^2)}{B_n^2}
 \le4\Lambda_n.
 \label{eq:section7-trunc-errors}
\end{align}
The first two estimates follow from $|x|>\tau B_n$ on $I_n^c$, and the last one also uses
$B_n^2-N_n^2\le2B_n^2\Lambda_n$.
Substituting these estimates into \eqref{eq:trunc-comparison} proves \eqref{eq:Lambda-ell-bound}.
\end{proof}

We are now on the final stage of the proof of Theorem~\ref{thm:main}.

\begin{proof}
We have
\begin{align*}
 \Lambda_n
 &=\frac1{B_n^{2+\delta}}\sum_{j=1}^n
 \int_{|x|>\tau B_n}|x|^{2+\delta}
 \frac{B_n^\delta}{|x|^\delta}\,\mu_j(dx)\\
 &\le\frac{\tau^{-\delta}}{B_n^{2+\delta}}
 \sum_{j=1}^n\beta_{2+\delta}(\mu_j),
\end{align*}
and
\begin{align*}
 \Upsilon_n
 &=\frac1{B_n^{2+\delta}}\sum_{j=1}^n
 \int_{|x|\le\tau B_n}|x|^{2+\delta}
 \left(\frac{|x|}{B_n}\right)^{1-\delta}\,\mu_j(dx)\\
 &\le\frac{\tau^{1-\delta}}{B_n^{2+\delta}}
 \sum_{j=1}^n\beta_{2+\delta}(\mu_j).
\end{align*}
Combining these estimates with Lemma~\ref{lem:weaker-moment} proves \eqref{eq:two-plus-delta} with a positive constant $C=\tau^{-1}+1$, because $\tau^{-\delta}+\tau^{1-\delta}\le\tau^{-1}+1$.
\end{proof}

\begin{remark}\label{rem:iid-two-plus-delta}
If $0<\delta\leq 1$, $\mu_1=\cdots=\mu_n=\mu$, $m_2(\mu)=\sigma^2>0$, and $\beta_{2+\delta}(\mu)<\infty$, then Theorem~\ref{thm:main} gives
\[
 \Delta\!\left(D_{(\sigma\sqrt n)^{-1}}\mu^{\bp n},\omega\right)
 \le C\frac{\beta_{2+\delta}(\mu)}{\sigma^{2+\delta}n^{\delta/2}}.
\]
\end{remark}

%%%%%%%%%%%%%%%%%%%%%%%%%%%%%%%%%%%%%%%%%
%%%%%%%%%%%%%%%%%%%%%%%%%%%%%%%%%%%%%%%%%%%%


\begin{thebibliography}{99}

\bibitem{Bai}
Z.~D. Bai,
\newblock Convergence rate of expected spectral distributions of large random matrices. I. Wigner matrices,
\newblock \emph{Ann. Probab.} \textbf{21} (1993), 625--648.

\bibitem{BG}
F. Benaych-Georges,
\newblock Taylor expansions of $R$-transforms, application to supports and moments,
\newblock \emph{Indiana Univ. Math. J.} \textbf{55} (2006), 465--482.

\bibitem{BV}
H. Bercovici and D. Voiculescu,
\newblock Free convolution of measures with unbounded support,
\newblock \emph{Indiana Univ. Math. J.} \textbf{42} (1993), 733--773.

\bibitem{CG}
G.~P. Chistyakov and F. G\"otze,
\newblock Limit theorems in free probability theory. I,
\newblock \emph{Ann. Probab.} \textbf{36} (2008), 54--90.

\bibitem{GT}
F. G\"otze and A. Tikhomirov,
\newblock Rate of convergence to the semi-circular law,
\newblock \emph{Probab. Theory Related Fields} \textbf{127} (2003), 228--276.

\bibitem{MS}
M. Maejima and N. Sakuma,
\newblock Rates of convergence in the free central limit theorem,
\newblock \emph{Statist. Probab. Lett.} \textbf{197} (2023), 109802.

\bibitem{Neufeld2024}
L. Neufeld,
\newblock Weighted sums and Berry--Esseen type estimates in free probability theory,
\newblock \emph{Probab. Theory Related Fields} \textbf{190} (2024), 803--879.

\bibitem{Neufeld2025}
L. Neufeld,
\newblock A new Berry--Esseen-type estimate in the free central limit theorem,
\newblock arXiv:2503.23403, 2025.

\bibitem{V86}
D. Voiculescu,
\newblock Addition of certain non-commuting random variables,
\newblock \emph{J. Funct. Anal.} \textbf{66} (1986), 323--346.

\end{thebibliography}
\end{document}